\theoremstyle{plain}
\newtheorem{thm}{Theorem}[section]
\newtheorem{cor}[thm]{Corollary}
\newtheorem{lem}[thm]{Lemma}
\newtheorem{prop}[thm]{Proposition}
\theoremstyle{definition}
\newtheorem{defn}[thm]{Definition}
\newtheorem{rem}[thm]{Remark}
\newtheorem{remarks}[thm]{Remarks}
\newcommand\Q{{\mathbb Q}}
\newcommand\N{{\mathbb N}}
\newcommand{\Gal}{\mathop{\mathrm{Gal} }\nolimits}
\newcommand{\Sp}{\mathop{\mathrm{Sp} }\nolimits}
\newcommand{\GSp}{\mathop{\mathrm{GSp} }\nolimits}
\newcommand\F{\mathbb{F}}
\newcommand\Frob{\mathrm{Frob}}
\newcommand{\Spec}{\mathop{\mathrm{Spec} }\nolimits}
\newcommand{\Aut}{\mathop{\mathrm{Aut} }\nolimits}
\newcommand\fs{\mathfrak{s}}
\newcommand\ft{\mathfrak{t}}
\newcommand\sA{\mathcal{A}}
\newcommand\an{\textup{an}}
\renewcommand\O{\mathcal{O}}
\newcommand\romanenum{\renewcommand{\labelenumi}{\textup{(}\roman{enumi}\textup{)}}}
\title{Abelian varieties over number fields, tame ramification and big Galois image}
\date{}
\begin{document}

\romanenum
\author{Sara Arias-de-Reyna \and Christian Kappen}
\maketitle

\begin{abstract}
Given a natural number $n\geq 1$ and a number field $K$, we show the existence of an integer $\ell_0$ such that for any prime number $\ell\geq \ell_0$, there exists a finite extension $F/K$, unramified in all places above $\ell$, together with a principally polarized abelian variety $A$ of dimension $n$ over $F$
such that the resulting $\ell$-torsion representation 
\[
 \rho_{A,\ell}\,:\,G_F\rightarrow\GSp(A[\ell](\overline{F}))
\]
is surjective and everywhere tamely ramified. In particular, we realize $\GSp_{2n}(\F_\ell)$ as the Galois group of a finite tame extension of number fields $F'/F$ such that $F$ is unramified above $\ell$.
\end{abstract}

\section{Introduction}

Let $\ell$ be a prime number. In this paper, we establish results regarding the existence of abelian varieties $A$ over number fields $F$ such that the representation $\rho_{A,\ell}$ of the absolute Galois group $G_F$ of $F$ on the symplectic $\F_\ell$-vector space of geometric $\ell$-torsion points of $A$ satisfies certain local and global conditions. More specifically, we are interested in finding $A/F$ such that $\rho_{A,\ell}$ is everywhere tamely ramified and has large image.

Our interest in such objects $A/F$ was inspired by the tame inverse Galois problem over the rational field $\Q$: if one could construct $A/\Q$ of dimension $n\geq 1$ such that
\begin{enumerate}
\item $\rho_{A, \ell}$ is everywhere tamely ramified and
\item $\rho_{A, \ell}:G_\Q\rightarrow\GSp_{2n}(\F_\ell)$ is surjective,
\end{enumerate}
then the finite Galois extension $(\overline{\Q})^{\ker \rho_{A, \ell}}/\Q$ would be tamely ramified with Galois group $\GSp_{2n}(\mathbb{F}_{\ell})$, thus providing a solution to the tame inverse Galois problem over $\Q$ for the group $\GSp_{2n}(\mathbb{F}_{\ell})$. Let us note that tameness is only a condition at the primes $p$ dividing the order of $\GSp_{2n}(\F_{\ell})$. Let us further note that if $p$ is a prime different from $\ell$ such that $A$ has good reduction at $p$, then $\rho_{A,\ell}$ is unramified (and hence tame) at $p$, by the criterion of N\'eron-Ogg-Shafarevich.

In previous work, the first author has constructed, for each prime number
$\ell$, infinitely many nonisomorphic non-CM elliptic curves over
$\Q$ having good supersingular reduction at $\ell$ (see
\cite{AriasVila1}); these elliptic curves satisfy the two conditions listed above. For abelian varieties of arbitrary dimension with good supersingular reduction at $\ell$, she has isolated a condition, called Hypothesis (H), which is sufficient to imply tame ramification at $\ell$ (see \cite{Arias}). Hypothesis (H) is a condition on the valuations of the coordinates of the $\ell$-torsion points of the group attached to the formal group law of the abelian variety. Using very explicit equations she manages to obtain, for each $\ell$, infinitely many abelian surfaces over $\Q$ arising as Jacobians of suitable curves, satisfying Hypothesis (H) and having trivial absolute endomorphism ring (see \cite{AriasVila2}). Her construction proceeds roughly as follows: she first finds a suitable genus two curve locally at $\ell$ which, in a second step, is globalised; the resulting curve over $\Q$ is then deformed in order to also satisfy requirements at primes different from $\ell$, requirements which ensure certain global properties (e.g.\ large image of the resulting Galois representation).

This approach to the problem relies on very explicit computations on the Jacobians of genus two curves, methods which are not available in higher dimensions. To address the question whether there exist higher-dimensional abelian varieties with the property that the Galois representation on the $\ell$-torsion points is tame and has large image, we follow a more conceptual approach: we consider a suitable moduli space of abelian varieties, and we prove the existence of a point $P$ on this space defined over some number field unramified above $\ell$ such that $P$ satisfies certain local conditions at a finite number of finite places, properties that will ensure tame ramification and large image. Our main result is the following:

\begin{thm}\label{mainthm}
Given a number field $K$ and an integer $n \geq 1$, there exists an integer $\ell_0$ such
that, for all prime numbers $\ell\geq \ell_0$, there exist a finite extension $F$ of $K$, unramified in all places above $\ell$, and an $n$-dimensional abelian variety
$A$ defined over $F$ such that the associated $\ell$-torsion representation
$\rho_{A, \ell}: G_F \rightarrow \GSp(A[\ell](\overline{F}))$
is surjective and everywhere tamely ramified.
\end{thm}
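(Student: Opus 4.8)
The plan is to work with the Siegel modular variety $\mathcal{A}_{n,1}$ parametrizing principally polarized abelian varieties of dimension $n$, or rather a suitable level structure cover of it, and to produce a rational point over a controlled number field by a combination of local analysis and a Hilbert-irreducibility/weak-approximation argument. The strategy splits the conditions ``tamely ramified'' and ``big image'' into conditions that can each be guaranteed by prescribing the behaviour of $A$ at a finite set of auxiliary primes, none of which lie above $\ell$.

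First I would fix, once and for all, the prime $p$ dividing $\#\GSp_{2n}(\F_\ell)$ — in fact all such primes $p\neq\ell$ — and arrange \emph{tame ramification at $\ell$} by imposing good reduction at $\ell$: by N\'eron--Ogg--Shafarevich, $\rho_{A,\ell}$ is then unramified, hence tame, at every place above $\ell$, and since $F$ will be chosen unramified above $\ell$ there is no wild inertia to worry about there. So the first step is to exhibit, locally at $\ell$, a point of the moduli space corresponding to an abelian variety with good reduction; here one must also ensure that this can be done over an extension of $K_\lambda$ (for $\lambda\mid\ell$) that is unramified, which is where the bound $\ell\geq\ell_0$ enters: for $\ell$ large compared to the ramification and residue data of $K$ one has enough room. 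The second step is to handle \emph{tameness at the bad primes} $p\mid\#\GSp_{2n}(\F_\ell)$, $p\neq\ell$: at each such $p$ I would choose a local point of the moduli space corresponding to an abelian variety whose reduction type forces the image of inertia at $p$ to be of order prime to $p$ — for instance semistable reduction with toric part, so that inertia acts unipotently and, the $\ell$-torsion being killed by a tame quotient once $\ell\neq p$, tamely; one checks this is a nonempty open (or at least locally closed, dominant-enough) condition in the $p$-adic topology on the moduli space.

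The third step is to secure \emph{surjectivity of $\rho_{A,\ell}$}. The standard device is to pick one or two further auxiliary primes $q_1,q_2\notin\{\ell\}\cup\{p\}$ and impose local conditions at $q_i$ guaranteeing that the image contains a transvection (e.g.\ via a place of multiplicative reduction, where the image of inertia is generated by a transvection) and an element acting irreducibly with the right characteristic polynomial (e.g.\ by prescribing the Frobenius at $q_2$ to reduce to a regular semisimple element whose char.\ polynomial is $\Z$-irreducible mod $\ell$); a group-theoretic lemma — the analogue for $\GSp_{2n}(\F_\ell)$ of the classical result that a subgroup containing a transvection and acting irreducibly is all of $\Sp$, adjusted for the similitude factor which is pinned down by the Weil pairing and the cyclotomic character — then forces the image to be everything. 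Each such condition is again open in the relevant $q_i$-adic topology on the moduli space. Finally, step four glues these finitely many local conditions together: one invokes weak approximation on the moduli space (it is a rational variety, or at least unirational and smooth, so weak approximation holds for $K$-points after a controlled base change) to find a single point $P$ defined over a number field $F/K$, unramified above $\ell$, simultaneously satisfying all the prescribed local conditions, and one checks that the finitely many ``big image'' conditions, being Zariski-dense / Hilbertian in nature, survive; a Hilbert irreducibility argument over the residue parameter may be needed to upgrade ``the image is large for a density-one set of specializations'' to ``we can choose $F$ with the desired ramification''.

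The main obstacle I expect is step four, the simultaneous globalization: one must realize all the local conditions — good reduction at $\ell$ over an \emph{unramified} local extension, the tame-inertia conditions at the $p\mid\#\GSp_{2n}(\F_\ell)$, and the transvection/irreducibility conditions at $q_1,q_2$ — at a \emph{single} point defined over a number field whose ramification above $\ell$ is trivial, and to control that $F/K$ precisely enough. Weak approximation gives a point over some number field, but keeping that field unramified above $\ell$ while the local condition at $\ell$ is itself ``good reduction'' (which a priori might only be attainable after ramified base change for small $\ell$) is exactly what forces the hypothesis $\ell\geq\ell_0$ and requires a careful smoothness/liftability argument for the local model at $\ell$. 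A secondary difficulty is the group theory in step three: proving the classification of subgroups of $\GSp_{2n}(\F_\ell)$ containing a transvection uniformly in $n$ and $\ell$, which I would either cite or reduce to the known $\Sp_{2n}$ statement together with the determination of the similitude character.
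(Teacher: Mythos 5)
Your outline captures the general shape of the argument — impose finitely many local conditions on the moduli space and globalize — but it contains one genuine error at the heart of the construction and replaces two of the paper's key tools by inadequate substitutes.

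The error is in Step 1. You claim that good reduction at a place $\lambda\mid\ell$ makes $\rho_{A,\ell}$ unramified at $\lambda$ by N\'eron--Ogg--Shafarevich. That criterion governs the $m$-torsion representation only at places of residue characteristic coprime to $m$; it says nothing about the $\ell$-torsion at places \emph{above} $\ell$. In fact, good (ordinary) reduction at $\lambda\mid\ell$ typically produces nontrivial wild inertia on $A[\ell]$: for a good ordinary elliptic curve over $\Q_\ell$, the inertia action is a nonsplit upper-triangular extension whose off-diagonal class can be wildly ramified. This is exactly why the paper imposes good \emph{supersingular} reduction at places above $\ell$, via $E_\nu^n$ with $E_\nu/\Q$ supersingular at $\ell$, and invokes Serre's result (\cite{Serre1972} Prop.~13) that inertia then acts through a (non-split Cartan) group of order prime to $\ell$, hence tamely. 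Without this, tameness at $\ell$ is simply not established. Your treatment of the primes $p\mid\#\GSp_{2n}(\F_\ell)$, $p\neq\ell$, by unipotent inertia would work in principle, but it is more complicated than the paper's choice (good reduction, so unramified by N\'eron--Ogg--Shafarevich, now correctly applicable since $\ell\neq p$).

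There are two further weaknesses in Step 4. First, you assert that the local conditions on $\rho_{A,\ell}$ are ``open in the $\nu$-adic topology'' but do not give a mechanism for this. It is not obvious: the paper uses Kisin's local constancy theorem for $p$-adic families of Galois representations to produce, around a given $K_\nu$-point of the moduli space, an admissibly open neighborhood on which the $\ell$-torsion representation is constant up to conjugacy. This is an essential ingredient, not a formality. Second, you invoke weak approximation (citing unirationality of the moduli space — which fails for $n\geq 7$) and a Hilbert irreducibility argument. The paper instead uses Moret-Bailly's theorem, which produces a point over a finite extension $F/K$, linearly disjoint from any prescribed $K'/K$, with the given places splitting completely and the localizations landing in prescribed open sets; this works for any smooth geometrically connected $K$-variety with no rationality hypothesis. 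The complete-splitting and linear-disjointness features of Moret--Bailly are precisely what keep $F$ unramified above $\ell$ and preserve $[F(\mu_\ell):F]=\ell-1$, neither of which is controlled by your weak-approximation plus Hilbert-irreducibility sketch. Your Step 3 (transvection plus an irreducible element of nonzero trace forces the image to contain $\Sp_{2n}(\F_\ell)$) matches the paper's Corollary~\ref{cor:hugeimage}; however, the paper arranges these elements in the image by a different route — a Jacobian with big image \`a la Hall, then \v{C}ebotarev to find places where Frobenius realizes the desired conjugacy classes, and then $\ell$-torsion approximation at those places — rather than by prescribing a multiplicative-reduction place and a Frobenius directly, a route that you would still need to make precise.
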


As a corollary, we obtain, for almost all prime numbers $\ell$, the group $\GSp_{2n}(\F_\ell)$ as the Galois group of a finite extension $F'/F$ of number fields, where $F$ is unramified above $\ell$. Let us note, however, that this result is a special case of \cite{Cal2011} Prop.\ 3.2, which solves the potential tame inverse Galois problem in full generality, i.e.\ for arbitrary finite groups, even with the imposition of local conditions at a finite number of primes. Our construction nevertheless provides a tamely ramified surjective Galois representation 
which comes from the $\ell$-torsion of a suitable abelian variety defined over a number field and, hence, is a member of a compatible system of Galois representations.

Let us outline the structure of the present paper. In Section \ref{surjsec} we give, for $\ell\geq 3$, a criterion for the $\ell$-torsion representation $\rho_{A,\ell}$ of an $n$-dimensional abelian variety defined over a number field $K$ to be surjective onto $\GSp_{2n}(\F_\ell)$:
we find that $\rho_{A,\ell}$  is surjective if the degree of the field extension $K(\mu_{\ell})/K$ obtained by adjoining to $K$ the $\ell$-th roots of unity equals $\ell-1$ and if furthermore the image of $\rho_{A, \ell}$ contains a transvection as well as an element whose characteristic polynomial is irreducible and has nonzero trace. (In the appendix, we show that such elements always exist in $\GSp_{2n}(\F_{\ell})$ if $\ell\geq 5$ and $\ell \nmid n$). In Section \ref{toolsec}, we revisit some tools from arithmetic geometry which we will need in the proof of Theorem \ref{mainthm}, namely Moret-Bailly's theorem on the existence of global points and Kisin's results on local constancy in $p$-adic families of Galois representations. In Section \ref{sec:new}, we set up some notation, and we apply the tools from the previous section to develop a series of useful results about existence of abelian varieties with prescribed properties that will be combined in the proof of the main result. In Section \ref{proofsec}, we give the proof of Theorem \ref{mainthm}, and we mention some open questions. Let us briefly outline the strategy of the proof of Theorem \ref{mainthm}:

Using the appendix of this paper and results of Hall and Kowalski (\cite{Hall}), we find, after possibly extending $K$, an integer $\ell_0>j$ and a Jacobian $J$ over $K$ admitting a full symplectic level $j$ structure (for some prefixed integer $j\geq 3$) such that $\mu_j\subseteq K$ and such that for all prime numbers $\ell\geq\ell_0$, $K/\Q$ is unramified at $\ell$, and the image of the $\ell$-torsion representation $\rho_{J,\ell}$ of $J$ contains an irreducible element of nonzero trace $\fs$ and a transvection $\ft$. Let $K''=K(J[\ell], J[j])$, and let $S$ denote the set of finite places of $K$ dividing the order of $\GSp_{2n}(\F_\ell)$. We find finite extensions $(K'_\nu/K_\nu)_{\nu\in S}$ of common degree $r$ together with elliptic curves of good reduction $E_\nu/K'_\nu$, defined over $\Q$, admitting full symplectic level $j$ structures, where for $\nu|\ell$ the reduction of $E_\nu$ is in addition supersingular. We then find a finite extension $K'/K$ of degree $r$ that is linearly disjoint from $K''/K$ and that induces the local extensions $K'_\nu/K_\nu$. After replacing $K$ by $K'$, the elliptic curves $E_\nu$ with their level structures are defined over the $K_\nu$, and the elements $\fs$ and $\ft$ still lie in the image of $\rho_{J,\ell}$. By \v{C}ebotarev's density theorem, there exist finite places $\mu_\fs$ and $\mu_\ft$ of $K$ away from $S$ such that $\mathfrak{r}\in\{\fs,\ft\}$ is the $\rho_{J,\ell}$-image of a Frobenius element at $\mu_\mathfrak{r}$. We set $S'=S\cup\{\mu_\fs,\mu_\ft\}$. Let $\sA=\sA_{n,1,j/K}$ denote the $K$-variety parametrizing principally polarized $n$-dimensional abelian varieties with full level $j$ structure. For each $\nu\in S$, let $x_\nu\in \sA(K_\nu)$ denote the point defined by $E_\nu^n$, and for $\mathfrak{r}\in\{\fs,\ft\}$, let $x_{\mu_\mathfrak{r}}\in\sA(K_\nu)$ denote the point defined by $J\otimes_KK_{\mu_\mathfrak{r}}$. By Kisin's results on local constancy of families of Galois representations (cf.\ \cite{Kisin1999}), there exists, for each $\nu\in S'$, a $\nu$-adically open neighborhood $\Omega_\nu$ of $x_\nu$ such that for each $y_\nu\in\Omega_\nu$, the resulting representations $\rho_{x_\nu,\ell}$ and $\rho_{y_\nu,\ell}$ coincide up to conjugation by elements in $\GSp_{2n}(\F_\ell)$. By a result of Moret-Bailly (cf.\ \cite{moret-bailly}), there exists a finite field extension $F/K$ linearly disjoint to $K(\mu_\ell)/K$ together with a point $y\in\sA(F)$ such that the places $\nu\in S'$ split completely in $F$ and such that $y_\nu\in\Omega_\nu$ for all $\nu\in S'$. The abelian variety $A/F$ defined by $y$ has then the property that $\rho_{A,\ell}$ is tamely ramified in all places of $F$ dividing the order of $\GSp_{2n}(\F_\ell)$ and that the image of $\rho_{A,\ell}$ contains conjugates of $\fs$ and $\ft$. Now $\rho_{A,\ell}$ is everywhere tamely ramified, and $\rho_{A,\ell}$ maps onto $\GSp_{2n}(\F_\ell)$. All the extensions of $K$ that we made in the above proof can be chosen to be unramified in the places above $\ell$, and $F/K$ is totally split in the places above $\ell$; hence $F$ is unramified above $\ell$, as desired.

Let us remark that if we dropped the requirement on $F$ of being unramified above $\ell$, there would be an easier way to find $A/F$: we could then simply consider the Jacobian $J/K$ above and eliminate ramification in primes dividing the order of $\GSp_{2n}(\F_\ell)$ by means of a finite extension $F/K$ that is orthogonal to $K(J[\ell])$.

The first author worked on this project as a research fellow of the Alexander von Humboldt
Foundation; she was partially supported by the Ministerio de Educaci\'on y
Ciencia grant MTM2009-07024.
The second author was partially supported by the SFB 45 ``Periods, moduli spaces and arithmetic of algebraic varieties'', and he would like to thank the University of Luxembourg for its hospitality. Both authors would like to thank Brian Conrad, Ulrich G\"ortz and Gabor Wiese for helpful discussions. They would also like to thank the anonymous referee, for his useful remarks and his very helpful suggestions regarding the overall structure of the paper.

\section{Surjectivity of representations attached to abelian varieties}\label{surjsec}

Let $\ell\geq 3$ be a prime number, let $n$ be a positive integer, and let $V$ be a $2n$-dimensional vector space over $\F_{\ell}$ endowed with a symplectic form. Let us recall the following result (cf. Theorem 2 of \cite{LiZha}, Theorem 1.1 of \cite{ArDiWiII}):

\begin{thm}\label{Principal} Let $G\subset \GSp(V)$ be a subgroup containing a transvection. Then exactly one of the following statements holds:
\begin{enumerate}
 \item The action of $G$ on $V$ is reducible in the sense that it stabilizes a nontrivial nonsingular symplectic subspace.
\item There exists a proper decomposition $V=\bigoplus_{i\in I} S_i$ of $V$ into equidimensional nonsingular symplectic subspaces $S_i$ such that for each $g\in G$ and each $i\in I$, there exists some $j\in I$ with $g(S_i)\subseteq S_j$ and such that the resulting action of $G$ on $I$ is transitive.
 \item $G\supset \Sp(V)$.
\end{enumerate}
\end{thm}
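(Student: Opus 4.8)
The plan is to classify subgroups $G$ containing a transvection by analysing how $G$ acts on $V$, splitting into the imprimitive and primitive cases. First I would dispose of the reducible case: if $G$ stabilises a nontrivial nonsingular symplectic subspace, we are in case (1) and there is nothing more to prove, so assume from now on that $G$ acts \emph{irreducibly} on $V$ in the sense that it fixes no nontrivial nonsingular symplectic subspace. The next dichotomy is primitivity. Suppose $G$ acts \emph{imprimitively}, i.e.\ there is a nontrivial direct sum decomposition $V = \bigoplus_{i \in I} W_i$ permuted by $G$, with the $G$-action on $I$ transitive (we may take the blocks to be of minimal size, hence the system to be primitive as a block system). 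The key point is that since $G$ preserves the symplectic form and acts transitively on the blocks, the radical structure of each $W_i$ under the form is $G$-invariant in a compatible way; a short argument shows one may refine or coarsen the system so that each block is nonsingular symplectic and all blocks are equidimensional, landing us in case (2). The transitivity of the $G$-action on $I$ is built into the definition of a block system.

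The substantive case is when $G$ acts \emph{primitively and irreducibly}; here I claim $G \supseteq \Sp(V)$, which is case (3). This is where the transvection hypothesis does the real work. Let $T \subseteq G$ be the (normal-closure-generated) subgroup generated by all transvections in $G$; since $G$ is primitive, $T$ is not contained in any block stabiliser, and one shows $T$ acts irreducibly on $V$. The classification of irreducible subgroups of $\Sp(V)$ generated by transvections is classical — this is essentially the theorem of Zalesski{\u\i}--Sere{\v{z}}kin (and McLaughlin in the linear case): an irreducible subgroup of $\GL(V)$ generated by transvections, over a finite field, is either $\SL(V)$, $\Sp(V)$, an orthogonal group $\mathrm{O}^{\pm}(V)$ (in characteristic $2$, realised inside $\Sp$), $\SU(V)$, or one of a short explicit list of exceptional groups ($S_m$, and a few small cases). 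Because our transvections are symplectic transvections, $T$ preserves the symplectic form, and inspecting the list — using that the form preserved is symplectic and nondegenerate, ruling out the unitary and generic orthogonal possibilities, and handling the small exceptional groups by hand (they fail to be symplectically self-paired, or fail primitivity/irreducibility for the relevant dimensions) — forces $T \supseteq \Sp(V)$. Since $T \subseteq G$, we get $G \supseteq \Sp(V)$.

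The main obstacle I anticipate is the primitive case, and specifically invoking and pruning the Zalesski{\u\i}--Sere{\v{z}}kin--McLaughlin classification cleanly: one must make sure that a symplectic transvection remains a transvection (not a more degenerate map) in the ambient $\GL(V)$, check that primitivity plus irreducibility of $G$ transfers to the transvection subgroup $T$, and then carefully exclude every non-symplectic entry on the list, paying attention to the characteristic-$2$ coincidences where orthogonal groups sit inside symplectic groups and where transvection counts differ. The imprimitive-to-case-(2) reduction is comparatively routine but does require a small lemma that the block decomposition can be taken with nonsingular symplectic equidimensional summands, using $G$-transitivity on blocks together with invariance of the form. Given that the statement is quoted from \cite{LiZha} and \cite{ArDiWiII}, I would in practice cite those directly; the sketch above is how one reconstructs the proof from the structure theory.
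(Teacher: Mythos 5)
The paper does not prove this statement at all: it is introduced with the words ``Let us recall the following result (cf.\ Theorem 2 of \cite{LiZha}, Theorem 1.1 of \cite{ArDiWiII})'', and no proof is given. So there is no ``paper's own proof'' for your sketch to be measured against; the paper does exactly what you say at the end of your proposal, namely cites the references.

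As for the reconstruction itself, it is along the right general lines (reducible / imprimitive / primitive, with the primitive case handled by the Zalesski\u{\i}--Sere\v{z}kin-type classification of irreducible transvection-generated groups), but two places are glossed over in a way that would need real work if you wanted a self-contained argument. First, in the imprimitive step you assert that the block system can be arranged so that the blocks are nonsingular, symplectic, equidimensional and permuted transitively; this is not automatic (a transvection-free group could, for instance, swap two complementary Lagrangians) and must be deduced \emph{using} the presence of a transvection --- one shows that a transvection of $V$ fixes every block of dimension $\geq 2$ setwise and then that compatibility with the form forces the blocks to be pairwise orthogonal, hence nondegenerate. Second, in the primitive step you pass from ``$G$ primitive irreducible'' to ``the normal subgroup $T$ generated by transvections acts irreducibly''; this is a Clifford-theory argument (a normal subgroup of a primitive irreducible group is homogeneous, and for transvection groups homogeneous implies irreducible) that deserves to be spelled out rather than asserted. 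Neither gap is fatal --- both are handled in the cited sources --- but the sketch as written would not compile into a proof without them. Since the paper's intent is simply to quote the theorem, citing \cite{LiZha} or \cite{ArDiWiII} is the appropriate move, and you correctly identify that.
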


From this statement we can derive a useful criterion ensuring that a group $G\subset \GSp(V)$ contains $\Sp(V)$:

\begin{cor}\label{cor:hugeimage}
Let $G\subset \GSp(V)$ be a subgroup containing a transvection and an element of nonzero trace whose characteristic polynomial is irreducible. Then $G\supset \Sp(V)$.
\end{cor}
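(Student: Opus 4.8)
The plan is to apply Theorem~\ref{Principal} to $G$ and to rule out cases (1) and (2), leaving only case (3), i.e.\ $G\supset\Sp(V)$. The key leverage is the element $g\in G$ whose characteristic polynomial is irreducible over $\F_\ell$ (of degree $2n$): such a $g$ acts on $V$ without any nontrivial invariant subspace, since an invariant subspace would give a proper factor of the characteristic (equivalently minimal) polynomial. First I would observe that this immediately kills case (1): a nontrivial nonsingular symplectic subspace stabilized by all of $G$ would in particular be a nontrivial proper $g$-invariant subspace, contradicting irreducibility.

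Next I would rule out case (2). Suppose $V=\bigoplus_{i\in I}S_i$ is such a decomposition with $|I|\geq 2$, all $S_i$ of common dimension $2m$ with $2mn'=2n$ where $n'=|I|$. The element $g$ permutes the summands, and since $\langle g\rangle$ acts on $V$ with $V$ being $g$-irreducible, the induced permutation action of $\langle g\rangle$ on $I$ must be transitive (an orbit decomposition of $I$ would yield a $g$-stable decomposition of $V$); so the permutation $g$ induces on $I$ is a single $|I|$-cycle. After reindexing, $g(S_i)=S_{i+1}$ cyclically, and then $g^{|I|}$ stabilizes each $S_i$. Writing $g$ in block form with respect to this decomposition, $g$ is a ``block companion''-type matrix, and a direct computation shows $\tr(g)=0$ because all diagonal blocks are zero (each $S_i$ is sent to a \emph{different} $S_{i+1}$, so no block of $g$ lies on the diagonal). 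This contradicts the hypothesis that $g$ has nonzero trace. Hence case (2) cannot occur either.

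With (1) and (2) excluded, Theorem~\ref{Principal} forces $G\supset\Sp(V)$, which is the claim. I would also note that the transvection hypothesis is used only to invoke Theorem~\ref{Principal}; the trace and irreducibility conditions on the second element do all the remaining work, and it is worth recording that irreducibility alone rules out (1) while irreducibility together with nonzero trace rules out (2).

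The main obstacle I anticipate is making the trace computation in case (2) clean and rigorous: one must argue carefully that the $\langle g\rangle$-action on $I$ is transitive (not merely that $g$ permutes the $S_i$), and then that transitivity of a cyclic group forces a single cycle, so that $g$ genuinely has no diagonal blocks and hence vanishing trace. A small subtlety is that the $S_i$ need not be $g$-invariant individually, so one cannot read off the trace naively; the point is precisely that the permutation is fixed-point-free on $I$ once $|I|\geq 2$ and the action is a single cycle. Once this is in place, the rest is immediate from Theorem~\ref{Principal}.
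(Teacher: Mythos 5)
Your proof is correct and follows essentially the same route as the paper: invoke Theorem~\ref{Principal} via the transvection hypothesis, use irreducibility of $g$'s characteristic polynomial to rule out case (i), and in case (ii) observe that $g$ cannot fix any summand $S_i$, forcing all diagonal blocks of $g$ in the block decomposition to vanish and hence $\tr(g)=0$, a contradiction. The only difference is cosmetic: you derive fixed-point-freeness of $g$ on $I$ by first establishing transitivity of the $\langle g\rangle$-action on $I$ and then noting a transitive cyclic action on $|I|\geq 2$ points is a single cycle, whereas the paper argues more directly that $g(S_i)=S_i$ for some $i$ would contradict irreducibility of $g$ on $V$. Both yield the same key fact (no diagonal blocks), so there is no genuine divergence in method.
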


\begin{proof}
Since $G$ contains a transvection, we are in one of the three cases of Theorem \ref{Principal}. Let $g\in G$ be an element of nonzero trace whose characteristic polynomial is irreducible. First of all, $g$ acts irreducibly on $V$, and hence the action of $G$ on $V$ cannot be reducible, which excludes case ($i$). Assume now that we are in case ($ii$), i.e.\ that $G$ preserves a nontrivial decomposition $V = \bigoplus_{i=1}^h S_i$. Since $g$ acts irreducibly, it cannot fix any of the $S_i$; hence the trace of $g$ must be zero, which leads to a contradiction. We conclude that only case ($iii$) of Theorem \ref{Principal} is compatible with the existence of $g$.
\end{proof}

\begin{rem}
In the Appendix, we prove that if $n$ is any positive integer and $\ell$ is an odd prime, then if $r\in \N$ is such that $\ell^r$ is sufficiently large, then the polynomial ring $\F_{\ell^r}[X]$ contains an irreducible symplectic polynomial of nonzero-trace and of degree $2n$.
\end{rem}

If $K$ is a number field, if $\overline{K}$ is an algebraic closure of $K$ and if $A$ is a principally polarized abelian $K$-variety of dimension $n$, we let $A[\ell](\overline{K})$ denote the $\F_{\ell}$-vector space of geometric $\ell$-torsion points of $A$; it is a $2n$-dimensional vector space over $\mathbb{F}_{\ell}$, and the Weil pairing gives rise to a non-degenerate symplectic form $\langle \cdot, \cdot \rangle:A[\ell]\times A[\ell]\rightarrow \mu_{\ell}$, where $\mu_{\ell}$ denotes the finite \'etale $K$-group scheme of $\ell$-th roots of unity. By functoriality, the action of $G_K=\mathrm{Gal}(\overline{K}/K)$ on $A[\ell](\overline{K})$ and on $\mu_\ell(\overline{K})$ commutes with this pairing. That is,
if we let $\chi_{\ell}$ denote the mod $\ell$ cyclotomic character, then for all $\sigma\in G_K$ and for all $P_1, P_2 \in  A[\ell](\overline{K})$,
\begin{equation*} \langle P_1^{\sigma}, P_2^{\sigma}\rangle=\chi_{\ell}(\sigma) \langle P_1, P_2\rangle\;,\end{equation*}
which compels the image of the representation $\rho_{A,\ell}:G_K\rightarrow\Aut(A[\ell](\overline{K}))$ to be contained in the general symplectic group $\GSp(A[\ell](\overline{K}))\simeq\mathrm{GSp}_{2n}(\mathbb{F}_{\ell})$. If  $[K(\mu_{\ell}):K]=\ell-1$, then $\chi_\ell$ is  surjective. In this case, if the image of $\rho_{A, \ell}$ contains $\Sp(A[\ell](\overline{K}))$, it must already coincide with $\GSp(A[\ell](\overline{K}))$.

Since a number field contains only a finite number of roots of unity, this observation
implies the following result:

\begin{lem}\label{mainsurjprop}
Let $K$ be a number field. Then there exists an integer $\ell_0$ such that for all primes $\ell\geq \ell_0$ and all abelian varieties $A/K$, the following holds: if the image of $\rho_{A, \ell}$ contains $\Sp_{2n}(A[\ell](\overline{K}))$, then the image of $\rho_{A, \ell}$ already coincides with $\GSp(A[\ell](\overline{K}))$.
\end{lem}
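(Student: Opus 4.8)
The plan is to notice that, once one extracts from the Weil pairing the fact that the similitude character of $\GSp$ coincides with the cyclotomic character along $\rho_{A,\ell}$, the implication ``image contains $\Sp$, hence equals $\GSp$'' becomes a statement about $K$ and $\ell$ alone, with no dependence on $A$. Recall that $\Sp(V)$ is exactly the kernel of the similitude (multiplier) character $\lambda\colon\GSp(V)\to\F_\ell^\times$, and that the displayed Weil pairing relation says precisely that $\lambda\circ\rho_{A,\ell}=\chi_\ell$. Hence, if $\chi_\ell$ is surjective and the image $H$ of $\rho_{A,\ell}$ contains $\Sp(A[\ell](\overline K))=\ker\lambda$, then the restriction $\lambda|_H\colon H\to\F_\ell^\times$ is surjective with kernel $\ker\lambda$, so $|H|=|\ker\lambda|\cdot(\ell-1)=|\GSp(A[\ell](\overline K))|$ and therefore $H=\GSp(A[\ell](\overline K))$. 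Since $\chi_\ell$ is surjective precisely when $[K(\mu_\ell):K]=\ell-1$, it suffices to exhibit an integer $\ell_0$ such that $[K(\mu_\ell):K]=\ell-1$ for every prime $\ell\geq\ell_0$ --- a condition involving only $K$ and $\ell$, so that one and the same $\ell_0$ will work for all abelian varieties $A/K$ at once.

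To find such an $\ell_0$, I would write $K(\mu_\ell)=K\cdot\Q(\mu_\ell)$ and use that $\Q(\mu_\ell)/\Q$ is Galois of degree $\ell-1$; the compositum formula then gives $[K(\mu_\ell):K]=[\Q(\mu_\ell):K\cap\Q(\mu_\ell)]$, so $[K(\mu_\ell):K]=\ell-1$ is equivalent to $K\cap\Q(\mu_\ell)=\Q$. Take $\ell_0$ to be any integer exceeding all of the (finitely many) rational primes ramifying in $K/\Q$. For $\ell\geq\ell_0$, the field $L:=K\cap\Q(\mu_\ell)$ is a subfield of $K$ and hence unramified at $\ell$ over $\Q$; but $L$ is also a subfield of $\Q(\mu_\ell)$, in which $\ell$ is totally ramified over $\Q$, so $\ell$ is in fact totally ramified in every intermediate field of $\Q(\mu_\ell)/\Q$. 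Thus $L=\Q$, as required. (One can also bypass ramification theory via Kronecker--Weber: $K$ has only finitely many subfields abelian over $\Q$, each of some finite conductor, and $L$ would be such a subfield with conductor dividing the prime $\ell$; for $\ell$ larger than all these conductors this forces $L=\Q$.)

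I do not anticipate a real obstacle: the argument is elementary once the two ingredients are separated, namely (a) that $\Sp=\ker\lambda$ and $\lambda\circ\rho_{A,\ell}=\chi_\ell$, which is just the Weil pairing compatibility recorded above, and (b) that $K\cap\Q(\mu_\ell)=\Q$ for all sufficiently large primes $\ell$, a standard consequence of the finiteness of the set of primes ramifying in $K$. The only subtlety worth flagging is the uniformity over all $A/K$, and this is automatic: after step (a) the hypothesis has been converted into the $A$-free statement $[K(\mu_\ell):K]=\ell-1$.
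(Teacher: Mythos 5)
Your proof is correct and follows essentially the same route as the paper: reduce via the Weil-pairing compatibility $\lambda\circ\rho_{A,\ell}=\chi_\ell$ to the $A$-independent statement that $\chi_\ell$ is surjective, i.e.\ $[K(\mu_\ell):K]=\ell-1$, for all sufficiently large $\ell$. The paper disposes of the latter with the terse remark that a number field contains only finitely many roots of unity, whereas your ramification (or Kronecker--Weber/conductor) argument makes that step precise and gives an explicit $\ell_0$.
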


\section{Compilation of tools}\label{toolsec}

In this section, we discuss some tools from arithmetic geometry that we will use in the proof of our main theorem. We do so mainly for the convenience of the reader, but also to provide some complementary details.

\subsection{Local constancy of $p$-adic families of Galois representations}\label{toolseclocal}

In order to discuss Kisin's results on local constancy in $p$-adic families of Galois representations (cf.\ \cite{Kisin1999}), we need the language of \'etale fundamental groups. We refer to \cite{sga1} Exp.\ V \S 4-7 for generalities on fundamental groups and in particular on \'etale fundamental groups for schemes. We will also need to use \'etale fundamental groups of rigid-analytic spaces, by which, following Kisin \cite{Kisin1999}, we mean the \emph{algebraic} \'etale fundamental groups defined in $\cite{dejong_fundgrp}$. If $U$ is a connected scheme or a connected rigid space and if $\bar{x}$ is a geometric point of $U$, we let $F_{U,\bar{x}}$ denote the associated fiber functor from the category of finite \'etale $U$-objects to the category of finite sets. By \cite{sga1} Exp.\ V Cor.\ 5.7, any two such fiber functors are isomorphic. If $\bar{y}$ is another geometric point of $U$, then an isomorphism of fiber functors $F_{U,\bar{y}}\cong F_{U,\bar{x}}$ will also be called an \emph{\'etale path} from $\bar{y}$ to $\bar{x}$ inside $U$. Since the \'etale fundamental group $\pi_1(U,\bar{x})$ of $U$ at $\bar{x}$ is, by definition, the group of automorphisms of the functor $F_{U,\bar{x}}$, any \'etale path $F_{U,\bar{x}}\cong F_{U,\bar{y}}$ gives rise to an isomorphism $\pi_1(U,\bar{x})\cong\pi_1(U,\bar{y})$, and the \'etale paths $\bar{x}\sim\bar{x}$ correspond to the inner automorphisms of $\pi_1(U,\bar{x})$.

Let $K$ be a field that is complete with respect to a nontrivial nonarchimedean valuation; in the following, we will also write $\Spec K$ instead of $\Sp K$, by abuse of notation. Let $U$ be a connected $K$-scheme or a connected rigid $K$-variety, let $p$ denote the structural morphism from $U$ to $\Spec K$, and let us assume that $U$ admits $K$-rational points
\[
x,y\,:\,\Spec K\rightarrow U\;.
\]
We fix an embedding of $K$ into an algebraic closure $\overline{K}$ of $K$, we let $\overline{z}$ denote the corresponding geometric point of $\Spec K$, and we let $\bar{x},\bar{y}$ denote the resulting geometric points of $U$ above $x$ and $y$ respectively; then $\bar{x}$ and $\bar{y}$ map to $\overline{z}$ via the structural morphism $U\rightarrow\Spec K$. The morphism $p$ induces identifications of fiber functors
\[
F_{\bar{z}}\,=\,F_{U,\bar{x}}\circ p^*\,=\,F_{U,\bar{y}}\circ\,p^*\;,\quad(1)
\]
while $x$ and $y$ induce identifications
\begin{eqnarray*}
F_{U,\bar{x}}&=&F_{\bar{z}}\circ x^*\;\textup{and}\\
F_{U,\bar{y}}&=&F_{\bar{z}}\circ y^*\;;\quad\quad\quad\quad\quad(2)
\end{eqnarray*}
here $p^*$, $x^*$ are $y^*$ denote the respective pullback functors on categories of finite \'etale schemes. By ($1$), an \'etale path $\alpha:F_{U,\bar{y}}\cong F_{U,\bar{x}}$ induces an automorphism $\beta$ of $F_{\bar{z}}$; by ($2$), $\beta$ induces an automorphism $\gamma$ of $F_{U,\bar{x}}$. After modifying $\alpha$ by $\gamma$, we see:
\begin{lem}\label{etpathlem}
There exists an \'etale path $F_{U,\bar{y}}\cong F_{U,\bar{x}}$ which induces the identity on $F_{\bar{z}}$.
\end{lem}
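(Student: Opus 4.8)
The plan is to use the fact that the $K$-rational point $x$ splits the canonical homomorphism $\pi_1(U,\bar x)\to\pi_1(\Spec K,\bar z)$, so that an arbitrary \'etale path $F_{U,\bar y}\cong F_{U,\bar x}$ can be corrected by an element of $\pi_1(U,\bar x)$ so as to kill the automorphism of $F_{\bar z}$ that it induces. Recall that a morphism $f\colon V\to W$ of connected schemes (or connected rigid spaces) together with a geometric point $\bar v$ of $V$ lying over $\bar w=f(\bar v)$ induces $f_{*}\colon\pi_1(V,\bar v)\to\pi_1(W,\bar w)$ by whiskering automorphisms of $F_{V,\bar v}$ with the pullback functor $f^{*}$ (I write $\eta\ast f^{*}$ for the natural transformation so obtained), using the identification $F_{V,\bar v}\circ f^{*}=F_{W,\bar w}$ coming from the fact that $\bar v$ lies over $\bar w$; similarly, an \'etale path $F_{V,\bar v'}\cong F_{V,\bar v}$ between two geometric points both lying over $\bar w$ induces an automorphism of $F_{W,\bar w}$ by the same whiskering, using the identification $(1)$. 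The only non-formal ingredient needed is the existence of \emph{some} \'etale path $F_{U,\bar y}\cong F_{U,\bar x}$, which is \cite{sga1}, Exp.\ V, Cor.\ 5.7 (the rigid-analytic case being covered by de Jong's Galois-category formalism, \cite{dejong_fundgrp}).

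So I would first fix an \'etale path $\alpha\colon F_{U,\bar y}\cong F_{U,\bar x}$. Whiskering $\alpha$ with $p^{*}$ and invoking $(1)$ produces an automorphism $\beta:=\alpha\ast p^{*}$ of $F_{\bar z}$, that is, an element of $\pi_1(\Spec K,\bar z)$. Whiskering $\beta$ with $x^{*}$ and invoking $(2)$ produces an automorphism $\gamma:=\beta\ast x^{*}=x_{*}(\beta)$ of $F_{U,\bar x}$, that is, an element of $\pi_1(U,\bar x)$. The structural point I would isolate is that $x_{*}$ is a section of $p_{*}\colon\pi_1(U,\bar x)\to\pi_1(\Spec K,\bar z)$: by functoriality of $\pi_1$ one has $p_{*}\circ x_{*}=(p\circ x)_{*}=\mathrm{id}$, because $p\circ x=\mathrm{id}_{\Spec K}$; at the level of pullback functors this is the identity $x^{*}\circ p^{*}=\mathrm{id}$ on categories of finite \'etale objects, which is the geometric fact that makes the argument run.

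Finally I would set $\alpha':=\gamma^{-1}\circ\alpha\colon F_{U,\bar y}\cong F_{U,\bar x}$, which is again an \'etale path, and check that it induces the identity on $F_{\bar z}$. Using that whiskering with $p^{*}$ is compatible with composition of natural transformations,
\[
\alpha'\ast p^{*}=(\gamma^{-1}\ast p^{*})\circ(\alpha\ast p^{*})=p_{*}(\gamma)^{-1}\circ\beta=\bigl(p_{*}x_{*}(\beta)\bigr)^{-1}\circ\beta=\beta^{-1}\circ\beta=\mathrm{id}_{F_{\bar z}}\,,
\]
which is precisely the assertion of the lemma. I expect the only real difficulty to be the bookkeeping: one must keep track of the fact that the passage from an \'etale path to the automorphism of $F_{\bar z}$ it induces, the homomorphism $p_{*}$, and the homomorphism $x_{*}$ are all instances of the same whiskering operation --- hence compatible with each other and with composition --- and that the identifications $(1)$ and $(2)$ are used consistently; no geometry enters beyond $p\circ x=\mathrm{id}_{\Spec K}$, and nothing changes for rigid $K$-varieties since the whole argument takes place formally inside the relevant Galois categories.
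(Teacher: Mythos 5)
Your proof is correct and follows exactly the construction the paper uses (the paragraph immediately before the lemma): fix an arbitrary \'etale path $\alpha$, pass via $(1)$ to $\beta\in\Aut(F_{\bar z})$, pass via $(2)$ to $\gamma\in\Aut(F_{U,\bar x})$, and correct $\alpha$ by $\gamma$. The paper states this without verification, whereas you spell out the whiskering calculus and, crucially, make explicit the identity $x^{*}\circ p^{*}=\mathrm{id}$ (equivalently $p_{*}\circ x_{*}=\mathrm{id}$) that makes the cancellation $\beta^{-1}\circ\beta$ go through, so your write-up is a faithful and slightly more detailed version of the paper's argument.
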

An \'etale path $\phi:F_{U,\bar{y}}\rightarrow F_{U,\bar{x}}$ as in Lemma \ref{etpathlem} above induces a commutative diagram
\[
\xymatrix{
1\ar[r]&\pi_{1,\textup{geom}}(U,\bar{y})\ar[r]\ar[d]^{\varphi'}&\pi_1(U,\bar{y})\ar[r]\ar[d]^\varphi&\Gal(\overline{K}/K)\ar[r]\ar[d]^{\overline{\varphi}}\ar@/_1pc/[l]_{y}&1\\
1\ar[r]&\pi_{1,\textup{geom}}(U,\bar{x})\ar[r]&\pi_1(U,\bar{x})\ar[r]&\Gal(\overline{K}/K)\ar[r]\ar@/_1pc/[l]_{x}&1\;,\\
}
\]
where the vertical maps are isomorphisms, where the geometric \'etale fundamental groups on the left are defined so that the rows in the diagram are exact, where we have identified $\pi_1(\Spec K,\bar{z})$ with $\Gal(\overline{K}/K)$ and where $\bar{\varphi}$ is the identity on $\Gal(\overline{K}/K)$.
We then have a commutative diagram
\[
\xymatrix{
&F_{U,\bar{y}}(\cdot)\ar[r]^\phi_\sim\ar@(dl,dr)[]&F_{U,\bar{x}}(\cdot)\ar@(dl,dr)[]&\\
\quad\quad\quad\quad&\pi_1(U,\bar{y})\ar[r]^{\varphi}_\sim\ar[d]&\pi_1(U,\bar{x})\ar[d]&\quad\quad\quad\quad(3)\\
&\Gal(\overline{K}/K)\ar@{=}[r]\ar@/^/[u]^{y}&\Gal(\overline{K}/K)\ar@/^/[u]^{x}\;.&\\
}
\]
Let now $A$ be a principally polarized abelian $U$-scheme, and let $\ell$ be a prime number; then $A[\ell]$ and $\mu_\ell$ are finite \'etale $U$-schemes, and the symplectic structures on the geometric fibers of $A[\ell]$ are induced from morphisms of finite \'etale $U$-schemes
\begin{eqnarray*}
+&:&A[\ell]\times A[\ell]\rightarrow A[\ell]\quad\textup{and}\\
\langle\cdot,\cdot\rangle&:&A[\ell]\times A[\ell]\rightarrow \mu_\ell\;.
\end{eqnarray*}
Since $\pi_1(U,\bar{x})$ is the automorphism group of the \emph{functor} $F_{U,\bar{x}}$, it thus acts on $F_{U,\bar{x}}(A[\ell])$ via symplectic automorphisms, that is, the image of the natural map
\[
\rho_{\bar{x}}\,:\,\pi_1(U,\bar{x})\rightarrow\Aut(F_{U,\bar{x}}(A[\ell]))
\]
lies in $\GSp(F_{U,\bar{x}}(A[\ell]))$, and the analogous statement holds for $\bar{y}$. Similarly, since $\phi$ is an isomorphism of fiber \emph{functors}, the isomorphism $\phi(A[\ell]):F_{U,\bar{y}}(A[\ell])\overset{\sim}{\rightarrow}F_{U,\bar{x}}(A[\ell])$ respects symplectic structures, so we obtain a commutative diagram
\[
\xymatrix{
&\GSp(F_{U,\bar{y}}(A[\ell]))\ar[r]^\phi_\sim&\GSp(F_{U,\bar{x}}(A[\ell]))&\\
\quad\quad\quad\quad&\pi_1(U,\bar{y})\ar[r]^{\varphi}_\sim\ar[d]\ar[u]&\pi_1(U,\bar{x})\ar[d]\ar[u]&\quad\quad\quad\quad(4)\\
&\Gal(\overline{K}/K)\ar@{=}[r]\ar@/^/[u]^{y}&\Gal(\overline{K}/K)\ar@/^/[u]^{x}\;.&\\
}
\]

Let now $S$ be a connected $K$-scheme of finite type, let us assume that $K$ has positive residue characteristic, and let $x$ be a $K$-rational point of $S$. If $X$ is any finite \'etale $S$-scheme, then by a main result of Kisin's article \cite{Kisin1999}, there exists an admissible open neighborhood $U$ of $x$ in the rigid analytification $S^\an$ of $S$ such that the action of $\pi_1(U,\bar{x})$ on the fiber $F_{U,\bar{x}}(X)$ factors through the section $x$. In other words, if
\[
\rho_{\bar{x}}\,:\,\pi_1(U,\bar{x})\rightarrow\Aut(F_{U,\bar{x}}(X))
\]
is the natural homomorphism, if $\pi_{\bar{x}}$ is the natural projection from $\pi_1(U,\bar{x})$ onto $\Gal(\overline{K}/K)$ and if $x$ is its given section, then $\rho_{\bar{x}}=\rho_{\bar{x}}\circ x\circ \pi_{\bar{x}}$. Let now $A$ be an abelian scheme over $S$;
we choose an admissible open neighborhood $U$ of $x$ in $S^\an$ such that $U$ satisfies the conclusion of Kisin's theorem for both $A[\ell]$ and $\mu_\ell$; this is possible since intersections of admissible open subspaces of $S^\an$ are again admissible open in $S^\an$. Then for any $K$-rational point $y$ of $U$ and any \'etale path $\phi:F_{\bar{y}}\overset{\sim}{\rightarrow} F_{\bar{x}}$ inside $U$ satisfying the condition of Lemma \ref{etpathlem}, we obtain a commutative diagram
\[
\xymatrix{
\GSp(F_{\bar{y}}(A[\ell]))\ar[r]^\sim&\GSp(F_{\bar{x}}(A[\ell]))\\
\Gal(\overline{K}/K)\ar@{=}[r]\ar[u]^y&\Gal(\overline{K}/K)\ar[u]^x\;.
}
\]
We conclude:
\begin{thm}\label{kisinthm}
If $K$ is a field that is complete with respect to a nontrivial nonarchimedean valuation of positive residue characteristic, if $S$ is a connected $K$-scheme of finite type, if $x$ is a $K$-rational point of $S$, if $A$ is a principally polarized abelian $S$-scheme and if $\ell$ is any prime number, then there exists an admissible open neighborhood $U$ of $x$ in the rigid analytification $S^\an$ of $S$ such that for all $K$-rational points $y$ of $U$ and for any choice of (symplectic) $\F_{\ell}$-bases of $A[\ell]_x(\overline{K})$, $A[\ell]_y(\overline{K})$ and $\mu_\ell(\overline{K})$, the natural representations
\[
\rho_x,\rho_y\,:\,G_K\rightarrow\GSp_{2n}(\F_\ell)
\]
coincide up to conjugation by an element of $\GSp_{2n}(\F_\ell)$, where $n$ denotes the dimension of $A$.
\end{thm}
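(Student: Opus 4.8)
The plan is to assemble the theorem from the diagrams $(3)$ and $(4)$ together with the version of Kisin's local constancy result already recalled in the discussion above. First I would invoke Kisin's theorem for the two finite \'etale $S$-schemes $A[\ell]$ and $\mu_\ell$ separately: each furnishes an admissible open neighborhood of $x$ in $S^\an$ on which the $\pi_1$-action on the corresponding fiber factors through the section $x$. Taking the intersection of these two neighborhoods — which is again admissible open, since finite intersections of admissible opens are admissible open — I obtain a single connected admissible open neighborhood $U$ of $x$ on which \emph{both} factorizations hold. Shrinking if necessary, I may assume $U$ is connected so that its \'etale fundamental group is defined; note that here I implicitly use that $U$ inherits $K$-rational points, namely $x$ itself and any $y$ we wish to consider.

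Next, fix a $K$-rational point $y$ of $U$. By Lemma \ref{etpathlem} applied to $U$, there is an \'etale path $\phi:F_{U,\bar y}\cong F_{U,\bar x}$ inducing the identity on $F_{\bar z}$, hence a commutative diagram of the shape $(3)$ in which the isomorphism $\varphi:\pi_1(U,\bar y)\to\pi_1(U,\bar x)$ restricts to the identity on the quotient $\Gal(\overline K/K)$ and is compatible with the sections $x$ and $y$. Since $\phi$ is an isomorphism of fiber \emph{functors}, it respects the morphisms $+$ and $\langle\cdot,\cdot\rangle$, so it carries symplectic bases to symplectic bases and we get the refined commutative square $(4)$ relating $\GSp(F_{U,\bar y}(A[\ell]))$ and $\GSp(F_{U,\bar x}(A[\ell]))$. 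Now Kisin's factorization says precisely that on $U$ the representation $\rho_{\bar x}:\pi_1(U,\bar x)\to\GSp(F_{U,\bar x}(A[\ell]))$ equals $\rho_{\bar x}\circ x\circ\pi_{\bar x}$, i.e.\ it factors through the section $x$; the same holds for $\bar y$. Composing with the sections and tracing through $(4)$, the Galois representation $\rho_y:G_K\to\GSp(F_{U,\bar y}(A[\ell]))$ attached to $y$ is identified, via the isomorphism induced by $\phi$, with the Galois representation $\rho_x$ attached to $x$.

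Finally I would translate this functorial identification into the statement about matrices. Choosing symplectic $\F_\ell$-bases of $A[\ell]_x(\overline K)$, $A[\ell]_y(\overline K)$ and $\mu_\ell(\overline K)$ identifies $\GSp(F_{U,\bar x}(A[\ell]))$ and $\GSp(F_{U,\bar y}(A[\ell]))$ with $\GSp_{2n}(\F_\ell)$; the isomorphism coming from $\phi$ then becomes an element of $\GSp_{2n}(\F_\ell)$ (it is symplectic because $\phi$ respects the pairing into $\mu_\ell$ and we have fixed a basis of $\mu_\ell$ compatibly), and conjugation by this element carries $\rho_x$ to $\rho_y$. I do not expect any serious obstacle here: the content is entirely Kisin's theorem, and the only care required is bookkeeping — making sure the \'etale path is normalized as in Lemma \ref{etpathlem} so that the induced map on $\Gal(\overline K/K)$ is the identity and the diagrams $(3)$, $(4)$ commute on the nose, and making sure one shrinks $U$ only finitely often (once for $A[\ell]$, once for $\mu_\ell$) so that the resulting $U$ is still an admissible open neighborhood of $x$. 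The mildest subtlety is the passage between the scheme $S$ and its analytification $S^\an$ and the use of de Jong's algebraic \'etale fundamental group for rigid spaces, but this is exactly the framework set up before the statement, so it may be used freely.
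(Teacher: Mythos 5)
Your proposal is correct and follows essentially the same argument as the paper: invoke Kisin's local constancy result for the two finite \'etale $S$-schemes $A[\ell]$ and $\mu_\ell$, intersect the resulting admissible open neighborhoods, normalize the \'etale path via Lemma~\ref{etpathlem}, and read off the conjugacy of $\rho_x$ and $\rho_y$ from the commutative diagram $(4)$ once bases are chosen. The minor points you flag (connectedness of $U$, compatibility of the $\mu_\ell$-basis with the symplectic bases) are genuine but unproblematic, and the paper glosses over them in the same spirit.
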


\subsection{Existence of global points}\label{toolsecglobal}

We quote a special case of a theorem of Moret-Bailly, cf.\ \cite{moret-bailly} and \cite{Cal2011} Theorem 3.1:

\begin{thm}\label{mbthm}
Let $K$ be a number field, let $X$ be a smooth geometrically connected $K$-variety, let $K'/K$ be a finite field extension, let $S$ be a finite set of (possibly infinite) places of $K$, and for each $\nu\in S$, let $\Omega_\nu$ be a nonempty $\nu$-adically open subset of $X(K_\nu)$. Then there exists a finite extension $F/K$ that is linearly disjoint to $K'/K$, together with an $F$-rational point $x\in X(F)$ such that for each $\nu\in S$
\begin{enumerate}
\item the place $\nu$ splits completely in $F$, and
\item for every place $\tilde{\nu}$ of $F$ above $\nu$, $x_{\tilde{\nu}}\in\Omega_\nu$ via the resulting natural identification
\[
X(F_{\tilde{\nu}})\cong X(K_\nu)\;.
\]
\end{enumerate}
\end{thm}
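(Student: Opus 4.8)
Theorem~\ref{mbthm} is quoted verbatim from \cite{moret-bailly} and \cite{Cal2011}, so the plan is not to reprove it but to indicate how it is assembled from the bare existence statement of Moret-Bailly together with two routine reductions; the analytic core — arithmetic capacity theory on a curve — is what I would simply cite.

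\emph{Reduction to $X$ quasi-projective and geometrically integral.} Being smooth and geometrically connected, $X$ is geometrically integral. Fix a nonempty affine open $U\subseteq X$; for each $\nu\in S$ the set $(X\setminus U)(K_\nu)$ is a closed subset of the smooth $K_\nu$-analytic manifold $X(K_\nu)$ of strictly smaller dimension, so $U(K_\nu)$ is dense and open in $X(K_\nu)$ and $\Omega_\nu':=\Omega_\nu\cap U(K_\nu)$ is again nonempty and $\nu$-adically open. Replacing $(X,(\Omega_\nu)_\nu)$ by $(U,(\Omega_\nu')_\nu)$, and if desired by the Zariski closure of $U$ in some $\Proj^N$ (in which $U(K_\nu)$ remains $\nu$-adically open, its complement being the $K_\nu$-points of a closed subscheme), we may assume $X$ quasi-projective and geometrically integral — the setting of the cited theorems.

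\emph{Folding linear disjointness into the splitting conditions.} Let $\widetilde{K'}/K$ be the Galois closure of $K'/K$; then $F/K$ is linearly disjoint from $K'/K$ once $F\cap\widetilde{K'}=K$, i.e.\ once $F$ contains none of the finitely many fields $M$ with $K\subsetneq M\subseteq\widetilde{K'}$. For each such $M$, \v{C}ebotarev's theorem gives infinitely many finite places $\nu_M$ of $K$, away from $S$ and of arbitrarily large residue characteristic, that do not split completely in $M$; taking the residue characteristic large (Lang--Weil plus Hensel on an integral model of $X$) ensures $X(K_{\nu_M})\neq\emptyset$. Enlarging $S$ by these $\nu_M$ with $\Omega_{\nu_M}:=X(K_{\nu_M})$ and invoking the existence statement below, the $\nu_M$ split completely in the resulting $F$; were $M\subseteq F$, the place $\nu_M$ would split completely in $M$, a contradiction. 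Hence $F\cap\widetilde{K'}=K$, so the disjointness clause need not be built into the underlying existence statement.

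\emph{The existence statement, and the main obstacle.} It remains to apply Moret-Bailly's theorem in its basic form: for a geometrically integral quasi-projective $K$-variety $X$, a finite set of places $S$, and nonempty $\nu$-adically open $\Omega_\nu\subseteq X(K_\nu)$, there is a closed point of $X$ with residue field a finite extension $F/K$ in which every $\nu\in S$ splits completely and such that, under the induced identifications $X(F_{\tilde\nu})=X(K_\nu)$ for $\tilde\nu\mid\nu$, every $x_{\tilde\nu}$ lies in $\Omega_\nu$; fixing an embedding $F\hookrightarrow\overline{K}$ turns this closed point into the desired $x\in X(F)$. This is the step I would cite rather than reprove: its proof first cuts $X$ down, by a Bertini-type argument, to a geometrically integral curve still meeting every $\Omega_\nu$, and then uses arithmetic capacity theory on the corresponding arithmetic surface to produce a global effective divisor that is, at each $\nu\in S$, a sum of degree-one points contained in $\Omega_\nu$. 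I expect precisely this curve case — the capacity-theoretic construction of the global point, in the tradition of Rumely and Green--Pop--Roquette — to be the only genuine difficulty; the two reductions above are bookkeeping.
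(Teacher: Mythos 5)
The paper does not prove this statement at all: it is quoted verbatim as a known result, citing \cite{moret-bailly} and \cite{Cal2011} Thm.\ 3.1. Your proposal goes further, and does so correctly. The reduction to a geometrically integral quasi-projective variety is fine: smooth plus geometrically connected gives geometrically integral, $X(K_\nu)$ is a $K_\nu$-analytic manifold of pure dimension $\dim X$, and the $K_\nu$-points of a proper closed subscheme have empty interior there, so $\Omega_\nu\cap U(K_\nu)$ is again nonempty open. Your second reduction — trading the linear-disjointness requirement for finitely many extra complete-splitting conditions at auxiliary places $\nu_M$, one for each proper intermediate field $M$ of the Galois closure $\widetilde{K'}/K$ — is also correct: if $\nu_M$ splits completely in $F$ and $M\subseteq F$, then $\nu_M$ splits completely in $M$, so choosing each $\nu_M$ to not split completely in $M$ (possible by \v{C}ebotarev, since $\Gal(\widetilde{K'}/M)$ is a proper subgroup and the union of its conjugates is a proper subset of $\Gal(\widetilde{K'}/K)$) forces $F\cap\widetilde{K'}=K$, hence $F$ linearly disjoint from $K'$; the Lang--Weil/Hensel step correctly guarantees $X(K_{\nu_M})\neq\emptyset$ once the residue characteristic is large. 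This is essentially how Calegari derives the linearly-disjoint form from Moret-Bailly's basic existence theorem, and your decision to cite rather than reprove the capacity-theoretic core (the reduction to a curve and the Rumely/Green--Pop--Roquette construction of the global divisor) is the only sensible one. In short: the paper supplies nothing here beyond a citation, and your reductions correctly bridge from the basic statement to the one stated, so your writeup is a genuine if modest complement to the paper's treatment.
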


\begin{remarks}\label{someremarks} Let us note:
\begin{enumerate}
\item If $X$ is as in Theorem \ref{mbthm} above and if $U\subseteq X_{K_\nu}^\an$ is an admissible open subset of the analytification of $X_{K_\nu}$, then $U(K_\nu)\subseteq X(K_\nu)$ is $\nu$-adically open.
\item Let $j\geq 3$ be an integer, and let $K$ be a number field containing the $j$-th roots of unity; then the moduli space $\sA_{n,1,j/K}$ of principally polarized $n$-dimensional abelian schemes with full symplectic level $j$ structure above $K$ is a smooth and geometrically connected $K$-scheme, cf.\ \cite{faltings_chai} Chap.\ IV Def.\ 6.1, Rem.\ 6.2 (c) and Cor.\ 5.10.
\end{enumerate}
\end{remarks}

\section{Existence of abelian varieties}\label{sec:new}

\subsection{Existence of $\ell$-torsion approximations}

We first apply the techniques of Sections \ref{toolseclocal} and \ref{toolsecglobal} to the problem of finding abelian varieties $A$ defined over finite extensions $F$ of a given number field $K$ such that, locally at the places of $F$ lying above a finite number of finite places $\nu$ of $K$, the $\ell$-torsion of $A$ coincides with the $\ell$-torsion of given abelian varieties that are defined over the local fields $K_\nu$.

\begin{defn}
Let $K$ be a number field, and let $n\geq 1$, $j\geq 1$ be integers. 
A local AV-datum (with level $j$-structure) of dimension $n$ over $K$ is a finite set $S$ of finite places of $K$ together with a family $(A_\nu\,;\,\nu\in S)$, where $A_\nu$ is an $n$-dimensional principally polarized abelian variety over $K_\nu$ (with full symplectic level $j$ structure). By abuse of notation, such a local AV-datum (with level $j$-structure) will be simply denoted by $(A_\nu\,;\,\nu\in S)$.
\end{defn}

\begin{defn}
Let $K$ be a number field, and let $(A_\nu\,;\,\nu\in S)$ be a local AV-datum of dimension $n$ over $K$. An $\ell$-torsion approximation of $(A_\nu\,;\,\nu\in S)$ is a pair $(F,A)$, where $F/K$ is a finite field extension that is totally split in $S$ and where $A$ is a principally polarized abelian $F$-variety such that for each $\nu\in S$, for each place $\tilde{\nu}$ of $F$ above $\nu$ and for any choice of symplectic $\F_\ell$-bases, the representations
\[
\rho_{A_\nu,\ell}\;\;\textup{and}\;\;\rho_{A,\ell|_{D_{\tilde{\nu}}}}\,:\, G_{F_{\tilde{\nu}}}\rightarrow\GSp_{2n}(\F_\ell)
\]
are conjugate.
\end{defn}

As we shall now see, the results of Kisin and Moret-Bailly show that $\ell$-torsion approximations $(F,A)$ of local AV-data with level j structure exist whenever $j\geq 3$ and $K\supset\mu_j$, even if one requires $F/K$ to be linearly disjoint from a given finite extension field $K'/K$. Heuristically speaking, Kisin's results show that a local AV-datum gives rise, via its $\ell$-torsion representations, to a family of congruence conditions on the moduli space, while Moret-Bailly's theorem tells us that these congruence conditions admit a global solution.

\begin{thm}\label{thm:step1}
Let $K$ be a number field, let $j\geq 3$ be an integer such that $\mu_j(\overline{K})\subseteq K$, and let $(A_\nu\,;\,\nu\in S)$ be a local AV-datum with level $j$-structure over $K$. Then for each prime number $\ell$ and for each finite field extension $K'/K$, there exists an $\ell$-torsion approximation $(F,A)$ of $(A_\nu\,;\,\nu\in S)$ such that $F/K$ is linearly disjoint to $K'/K$.
\end{thm}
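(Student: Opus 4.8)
The plan is to encode the local AV-datum $(A_\nu\,;\,\nu\in S)$ as a family of $\nu$-adically open conditions on the Siegel moduli space, via Kisin's local constancy result (Theorem~\ref{kisinthm}), and then to produce a global point meeting all of these conditions, over a number field linearly disjoint from $K'/K$, by means of Moret-Bailly's theorem (Theorem~\ref{mbthm}). Thus no ingredient beyond the two tools recalled in Section~\ref{toolsec} is needed; the work lies in fitting them together.

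Concretely, I would first set $X=\sA_{n,1,j/K}$, which by Remarks~\ref{someremarks}(2) is a smooth and geometrically connected $K$-scheme of finite type since $j\geq 3$ and $\mu_j(\overline{K})\subseteq K$; let $\mathcal{G}\to X$ be its universal principally polarized abelian scheme. For each $\nu\in S$, the abelian variety $A_\nu$ equipped with its level $j$ structure is a $K_\nu$-rational point $x_\nu\in X(K_\nu)$ with $\mathcal{G}_{x_\nu}\cong A_\nu$, so that after a choice of symplectic $\F_\ell$-basis one has $\rho_{x_\nu,\ell}=\rho_{A_\nu,\ell}$.

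Next, fix the prime $\ell$. For each $\nu\in S$ I would apply Theorem~\ref{kisinthm} to the complete nonarchimedean field $K_\nu$ (of positive residue characteristic), the connected finite-type $K_\nu$-scheme $X_{K_\nu}$, the point $x_\nu$, the abelian scheme $\mathcal{G}_{K_\nu}$ and the prime $\ell$: this yields an admissible open neighborhood $U_\nu$ of $x_\nu$ in $X_{K_\nu}^\an$ such that for every $K_\nu$-rational point $y$ of $U_\nu$ the representations $\rho_{x_\nu,\ell}$ and $\rho_{y,\ell}$ of $G_{K_\nu}$ are $\GSp_{2n}(\F_\ell)$-conjugate. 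Put $\Omega_\nu:=U_\nu(K_\nu)\subseteq X(K_\nu)$; by Remarks~\ref{someremarks}(1) this is $\nu$-adically open, and it is nonempty as it contains $x_\nu$. I would then apply Theorem~\ref{mbthm} to $X$, the extension $K'/K$, the set $S$ and the family $(\Omega_\nu)_{\nu\in S}$: it produces a finite extension $F/K$ linearly disjoint from $K'/K$ together with a point $y\in X(F)$ such that every $\nu\in S$ splits completely in $F$ and such that, for each $\nu\in S$ and each place $\tilde\nu\mid\nu$ of $F$, the point $y_{\tilde\nu}$ lies in $\Omega_\nu$ under the identification $X(F_{\tilde\nu})=X(K_\nu)$ coming from $F_{\tilde\nu}=K_\nu$. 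Setting $A:=\mathcal{G}_y$, the principally polarized abelian $F$-variety obtained by pulling $\mathcal{G}$ back along $y$, the extension $F/K$ is totally split in $S$ by construction, and for $\nu\in S$, $\tilde\nu\mid\nu$ the restriction of $\rho_{A,\ell}$ to a decomposition group $D_{\tilde\nu}\cong G_{F_{\tilde\nu}}=G_{K_\nu}$ is, in a suitable symplectic basis, the representation $\rho_{y_{\tilde\nu},\ell}$ attached to the $K_\nu$-point $y_{\tilde\nu}\in U_\nu(K_\nu)$; by Theorem~\ref{kisinthm} it is therefore $\GSp_{2n}(\F_\ell)$-conjugate to $\rho_{x_\nu,\ell}=\rho_{A_\nu,\ell}$. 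Hence $(F,A)$ is an $\ell$-torsion approximation of $(A_\nu\,;\,\nu\in S)$ with $F/K$ linearly disjoint from $K'/K$.

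The hard part will not be conceptual but will consist of the bookkeeping needed to make the identifications line up: that an admissible open $U_\nu\subseteq X_{K_\nu}^\an$ gives, through its $K_\nu$-points, a genuine $\nu$-adically open subset of $X(K_\nu)$ (this is Remarks~\ref{someremarks}(1)); that complete splitting of $\nu$ in $F$ yields $F_{\tilde\nu}=K_\nu$, so that the open condition imposed by Moret-Bailly at $\tilde\nu$ is literally the condition $y_{\tilde\nu}\in U_\nu$ produced by Kisin; and that forming the mod-$\ell$ torsion representation commutes with base change to completions, so that $\rho_{A,\ell}|_{D_{\tilde\nu}}$ is canonically the representation of $G_{F_{\tilde\nu}}$ attached to $A_{F_{\tilde\nu}}$, i.e.\ to the point $y_{\tilde\nu}$. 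The only point of genuine substance is the simultaneous requirement that $F$ be totally split at $S$ and linearly disjoint from $K'/K$, and this is exactly what Theorem~\ref{mbthm} guarantees.
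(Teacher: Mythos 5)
Your proposal is correct and follows essentially the same route as the paper's proof: for each $\nu\in S$ use Theorem~\ref{kisinthm} to produce a $\nu$-adically open neighborhood $\Omega_\nu$ of $x_\nu$ in $\sA_{n,1,j/K}(K_\nu)$ on which the mod-$\ell$ representation is locally constant up to conjugation, then invoke Theorem~\ref{mbthm} to find $F/K$ linearly disjoint from $K'/K$, totally split at $S$, together with a point $y\in\sA_{n,1,j/K}(F)$ landing in each $\Omega_\nu$, and take $A$ to be the corresponding abelian variety. The only cosmetic difference is that you make the universal abelian scheme and the passage from the admissible open $U_\nu\subseteq X_{K_\nu}^\an$ to $\Omega_\nu=U_\nu(K_\nu)$ explicit, whereas the paper compresses these into a single step.
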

 
\begin{proof} Let $n$ denote the dimension of $(A_\nu\,;\,\nu\in S)$, and let us write $\sA$ to denote the moduli space $\sA_{n,1,j/K}$ of $n$-dimensional principally polarized abelian schemes with full symplectic level $j$ structure over $K$, cf.\ Remark \ref{someremarks} ($ii$) above. For each $\nu\in S$, let  $x_\nu$ denote the $K_\nu$-valued point of $\sA$ corresponding to $A_{\nu}$.
By Kisin's theory (cf.\ Theorem \ref{kisinthm} above), we may choose, for each $\nu\in S$, a $\nu$-adically open neighborhood $\Omega_\nu$ of $x_\nu$ in $\sA(K_\nu)$ such that for any $y_\nu\in\Omega_\nu$, the representations $\rho_{x_\nu}:G_{K_{\nu}}\rightarrow \GSp(A_{\nu}[\ell](\overline{K}_{\nu}))\simeq \GSp_{2n}(\mathbb{F}_{\ell})$ and $\rho_{y_\nu}:G_{K_{\nu}}\rightarrow \GSp(B_{\nu}[\ell](\overline{K}_{\nu}))\simeq \GSp_{2n}(\mathbb{F}_{\ell})$ coincide up to conjugation with an element of $\GSp_{2n}(\F_\ell)$, where $B_\nu$ denotes the abelian $K_\nu$-variety that is given by $y_\nu$. By Moret-Bailly's theorem (cf.\ Theorem \ref{mbthm} above), there exists a finite extension $F/K$ linearly disjoint to $K'/K$ with a point $y\in\sA(F)$ such that each $\nu\in S$ splits completely in $F$ and such that for each $\nu\in S$ and each place $\tilde{\nu}$ of $F$ lying above $\nu$, the localization $y_{\tilde{\nu}}\in\sA(F_{\tilde{\nu}})$ of $y\in\sA(F)$ lies in $\Omega_\nu$, where we use the natural isomorphism $\sA(F_{\tilde{\nu}})\cong\sA(K_\nu)$ to regard $\Omega_\nu$ as a subset of $\sA(F_{\tilde{\nu}})$. The point $y$ now defines a principally polarized abelian variety $A$ over $F$ such that for all $\nu\in S$ and all places $\tilde{\nu}$ of $F$ above $\nu$,  $\rho_{A,\ell}|_{D_{\tilde{\nu}}}$ and $\rho_{A_{\nu}, \ell}$ coincide up to conjugation by an element of $\mathrm{GSp}_{2n}(\mathbb{F}_{\ell})$. 
\end{proof}

\subsection{Local conditions enforcing big image}

\begin{defn}
Let $K$ be a number field, let $A$ be a principally polarized abelian $K$-variety, and let $\ell$ be a prime number. We say that $A$ has big image at $\ell$ if the natural homomorphism
\[
\rho_{A,\ell}\,:\,G_K\rightarrow\GSp(A[\ell](\overline{K}))
\]
is surjective.
\end{defn}

Given an abelian variety $A$ defined over a field $K$, we will denote by $K(A[\ell])$ the finite Galois extension of $K$ defined by the kernel of $\rho_{A,\ell}$.

\begin{lem}\label{cebotarevlem}
Let $K$ be a number field, let $A$ be a principally polarized abelian $K$-variety with big image at a prime number $\ell$, let $S$ be any finite set of finite places of $K$, and let $C\subseteq\GSp(A[\ell](\overline{K}))$ be a conjugacy class. Then there exists a finite place $\nu$ of $K$ away from $S$ such that $\rho_{A,\ell}$ is unramified at $\nu$ and such that $\rho_{A,\ell}(\Frob_\nu)\in C$. If moreover $j\geq 1$ is an integer such that $K(A[\ell])$ and $K(A[j])$ are linearly disjoint over $K$, there exists such a place $\nu$ with the additional property that $A\otimes_KK_\nu$ admits a full symplectic level $j$ structure.
\end{lem}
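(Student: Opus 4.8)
The plan is to invoke the \v{C}ebotarev density theorem for the finite Galois extension $K(A[\ell])/K$, and then to refine the choice of place by a second \v{C}ebotarev-type argument inside the compositum $K(A[\ell])\cdot K(A[j])$. First I would note that since $A$ has big image at $\ell$, the representation $\rho_{A,\ell}$ induces an isomorphism $\Gal(K(A[\ell])/K)\cong\GSp(A[\ell](\overline{K}))$, so a conjugacy class $C$ in $\GSp(A[\ell](\overline{K}))$ corresponds to a conjugacy class $\tilde{C}$ in $\Gal(K(A[\ell])/K)$. The set of primes $\nu$ of $K$ that are unramified in $K(A[\ell])$ and whose Frobenius conjugacy class equals $\tilde{C}$ has positive density by \v{C}ebotarev; in particular this set is infinite, so we may pick such a $\nu$ outside the finite set $S$ (and outside the finitely many places ramifying in $K(A[\ell])$, and outside those dividing $\ell$). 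For such $\nu$, $\rho_{A,\ell}$ is unramified at $\nu$ and $\rho_{A,\ell}(\Frob_\nu)\in C$, which gives the first assertion.

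For the second assertion, assume $j\geq 1$ with $K(A[\ell])$ and $K(A[j])$ linearly disjoint over $K$. Then $\Gal(K(A[\ell])\cdot K(A[j])/K)\cong\Gal(K(A[\ell])/K)\times\Gal(K(A[j])/K)$. I want the additional property that $A\otimes_K K_\nu$ admits a full symplectic level $j$ structure; this happens exactly when the mod-$j$ representation $\rho_{A,j}$ is trivial on the decomposition group at $\nu$, equivalently when $\Frob_\nu$ lies in the trivial conjugacy class of $\Gal(K(A[j])/K)$, i.e.\ when $\nu$ splits completely in $K(A[j])$. So I would apply \v{C}ebotarev to the compositum and to the conjugacy class $\tilde{C}\times\{1\}\subseteq\Gal(K(A[\ell])/K)\times\Gal(K(A[j])/K)$, which is a genuine conjugacy class precisely because of the linear disjointness; the corresponding set of unramified primes $\nu$ has positive density, hence is infinite, hence contains a place away from $S$, from the ramification locus, and from the primes above $\ell$. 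Such a $\nu$ simultaneously satisfies $\rho_{A,\ell}(\Frob_\nu)\in C$ and splits completely in $K(A[j])$, so that $A\otimes_K K_\nu$ acquires a full level $j$ structure over $K_\nu$.

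The one point needing a little care, and the main (though modest) obstacle, is the translation ``$\Frob_\nu$ acts trivially on $A[j]$ $\Longleftrightarrow$ $A\otimes_K K_\nu$ admits a full symplectic level $j$ structure.'' Here one uses that the full level $j$ structure is a choice of symplectic basis of $A[j](\overline{K_\nu})$ that is $G_{K_\nu}$-stable; since the Weil pairing lands in $\mu_j$, and $\mu_j\subseteq K$ by hypothesis in the intended application, such a $G_{K_\nu}$-stable symplectic basis exists iff $G_{K_\nu}$ acts trivially on $A[j](\overline{K_\nu})$, which is the condition that $\nu$ split completely in $K(A[j])$. (One also needs $\nu\nmid j$ for $A[j]$ to be étale over $\O_{K_\nu}$ and for the level structure to be unramified; this is automatic once $\nu$ is taken outside a suitable finite set.) Everything else is a direct application of \v{C}ebotarev together with the isomorphism $\Gal(K(A[\ell])/K)\cong\GSp(A[\ell](\overline{K}))$ furnished by the big-image hypothesis.
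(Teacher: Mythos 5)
Your proof is correct and follows essentially the same route as the paper: apply the \v{C}ebotarev density theorem to the compositum $K(A[\ell])\cdot K(A[j])$, using the linear disjointness to identify $\Gal(K(A[\ell])K(A[j])/K)$ with a direct product so that $\tilde C\times\{1\}$ is a genuine Frobenius target. The paper phrases the non-emptiness slightly differently (via surjectivity of $\rho_{A,\ell}|_{G_{K(A[j])}}$ rather than the explicit product decomposition), but the two formulations are equivalent; the only minor caveat in your write-up is the aside invoking $\mu_j\subseteq K$, which is not a hypothesis of the lemma and is also unnecessary, since $\nu$ splitting completely in $K(A[j])$ already forces $\mu_j\subseteq K_\nu$ via the Weil pairing, which is all one needs.
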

\begin{proof}
Let $L=K(A[\ell])$, let $K'$ denote the field $K(A[j])$ if we are given an integer $j\geq 1$ as in the statement, and let us set $K'=K$ otherwise. Then in both bases, $L$ and $K'$ are linearly disjoint over $K$. It suffices to find a positive density set of finite places $\nu$ of $K$ such that $L/K$ is unramified at $\nu$, such that $\rho_{A,\ell}(\Frob_\nu)\in C$ and such that $\nu$ splits completely in $K'$. By \cite{neukirch_ant} Lemma 13.5, it thus suffices to find a positive density set of finite places $\nu$ of $K$ such that $\nu$ is unramified in $LK'$ (hence in $L$ and $K'$), such that $\rho_{A,\ell}(\Frob_\nu)\in C$ and such that $\Frob_\nu$ restricts to the trivial automorphism of $K'/K$. By the \v{C}ebotarev density theorem (cf.\ \cite{neukirch_ant} Thm.\ 13.4), it thus suffices to show that inside $\Gal(LK'/K)$, the $\rho_{A,\ell}$-preimage of $C$ and $\Gal(LK'/K')$ have nonempty intersection. This, however, follows from the fact that the restriction of $\rho_{A,\ell}$ to $G_{K'}$ is surjective, which in turn follows from the fact that $\rho_{A,\ell}$ is surjective and from the fact that $L$ and $K'$ are $K$-linearly disjoint.
%
\end{proof}

\begin{prop}\label{localsurjprop}
Given a number field $K$ and integers $n,j\geq 1$, there exist an integer $\ell_0\geq 1$ such that for every prime $\ell\geq\ell_0$, there exists a finite field extension $K''/K$ such that for every finite field extension $K'/K$ that is linearly disjoint from $K''/K$, there exists a set $S$ of two finite places of $K'$ away from any given finite set of finite places of $K'$ and a local AV-datum with level $j$ structure $(J_\nu\,;\,\nu\in S)$ such that every $\ell$-torsion approximation $(F,A)$ of $(J_\nu\,;\,\nu\in S)$ with $F$ being $K'$-linearly disjoint to $K'(\mu_\ell)$ has big image at $\ell$.
\end{prop}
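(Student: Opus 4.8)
Here is how I would prove Proposition~\ref{localsurjprop}.

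The plan is to fix, once and for all, a single Jacobian $J$ over $K$ whose mod-$\ell$ image is full for every large $\ell$, to convert the abstract group elements supplied by the Appendix into Frobenius conditions on $J$ at two auxiliary places, and then to observe that any $\ell$-torsion approximation is forced to carry these elements in its image; combined with surjectivity of the cyclotomic character this yields big image. First I would invoke the results of Hall and Kowalski (\cite{Hall}) and the Appendix to produce --- after possibly replacing $K$ by a finite extension --- an $n$-dimensional Jacobian $J$ over $K$ with $\End_{\overline K}(J)=\Z$ and a place of semistable reduction of toric dimension one, together with an integer $\ell_1$, such that for every prime $\ell\geq\ell_1$ we have: $K/\Q$ is unramified above $\ell$ (automatic for $\ell$ large), $J$ has good reduction above $\ell$, $\ell\nmid j$, $[K(\mu_\ell):K]=\ell-1$, $\rho_{J,\ell}(G_K)\supseteq\Sp(J[\ell](\overline K))$, and $\Sp_{2n}(\F_\ell)$ contains a transvection $\ft$ and an element $\fs$ of nonzero trace with irreducible characteristic polynomial (the last point being the content of the Appendix, valid for $\ell\geq 5$ with $\ell\nmid n$). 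Using Lemma~\ref{mainsurjprop} and increasing $\ell_1$ once more if necessary, I would arrange in addition that $\rho_{J,\ell}(G_K)=\GSp(J[\ell](\overline K))$ and that $\GSp_{2n}(\F_\ell)$ has no nonabelian quotient of order at most $|\GSp_{2n}(\Z/j\Z)|$; then set $\ell_0:=\ell_1$.

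Now fix $\ell\geq\ell_0$ and put $K'':=K(J[\ell],J[j])$. The key structural point is that $K(J[\ell])$ and $K(J[j])$ are linearly disjoint over $K$: their intersection $L_0$ is Galois over $K$, and for $\ell\geq\ell_0$ any quotient of $\mathrm{Gal}(K(J[\ell])/K)=\GSp(J[\ell](\overline K))$ which is simultaneously a quotient of the fixed finite group $\GSp_{2n}(\Z/j\Z)\supseteq\mathrm{Gal}(K(J[j])/K)$ must be abelian --- a nonabelian one would have $\mathrm{PSp}_{2n}(\F_\ell)$ as a subquotient, of order exceeding $|\GSp_{2n}(\Z/j\Z)|$ --- so $L_0$ is abelian over $K$, hence $L_0\subseteq K(\mu_\ell)$; but $L_0\subseteq K(J[j])$ is unramified above $\ell$ by N\'eron--Ogg--Shafarevich, while $K(\mu_\ell)/K$ is totally ramified above $\ell$ since $K/\Q$ is unramified there, whence $L_0=K$. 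Let now $K'/K$ be any finite extension linearly disjoint from $K''/K$ and let $T$ be any prescribed finite set of finite places of $K'$, enlarged to contain the places above $\ell$ and the places of bad reduction of $J$. Since $K'$ is linearly disjoint over $K$ from $K(J[\ell])\subseteq K''$ we get $\rho_{J,\ell}(G_{K'})=\rho_{J,\ell}(G_K)=\GSp(J[\ell](\overline K))$ --- so $J\otimes_KK'$ has big image at $\ell$ --- and also $[K'(\mu_\ell):K']=\ell-1$; and since $K'$ is linearly disjoint over $K$ from $K''$ and $L_0=K$, the fields $K'(J[\ell])$ and $K'(J[j])$ are linearly disjoint over $K'$. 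I may therefore apply Lemma~\ref{cebotarevlem} over $K'$ to $J\otimes_KK'$: taking $C$ to be the conjugacy class of $\fs$ (which lies in $\Sp_{2n}(\F_\ell)\subseteq\GSp(J[\ell](\overline K))=\rho_{J,\ell}(G_{K'})$), I obtain a finite place $\mu_\fs\notin T$ of $K'$ at which $\rho_{J,\ell}$ is unramified with $\rho_{J,\ell}(\Frob_{\mu_\fs})\in C$ and such that $J\otimes_KK'_{\mu_\fs}$ admits a full symplectic level $j$ structure; likewise I obtain $\mu_\ft\notin T\cup\{\mu_\fs\}$ for the conjugacy class of $\ft$. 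Put $S:=\{\mu_\fs,\mu_\ft\}$ and let $J_\nu:=J\otimes_KK'_\nu$, equipped with such a level $j$ structure, for $\nu\in S$; this is a local AV-datum with level $j$ structure over $K'$, and $S$ avoids the given finite set.

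Finally, let $(F,A)$ be an $\ell$-torsion approximation of $(J_\nu\,;\,\nu\in S)$ with $F/K'$ linearly disjoint from $K'(\mu_\ell)/K'$. For $\nu\in S$ and a place $\tilde\nu$ of $F$ above $\nu$ we have $F_{\tilde\nu}=K'_\nu$, since $\nu$ splits completely in $F$, and $\rho_{A,\ell}|_{D_{\tilde\nu}}$ is conjugate in $\GSp_{2n}(\F_\ell)$ to $\rho_{J_\nu,\ell}=\rho_{J,\ell}|_{G_{K'_\nu}}$, which is unramified with Frobenius image in the conjugacy class of $\fs$ for $\nu=\mu_\fs$, resp.\ of $\ft$ for $\nu=\mu_\ft$. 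Hence $\rho_{A,\ell}(G_F)$ contains an element conjugate to $\fs$ and one conjugate to $\ft$, that is, an element of nonzero trace with irreducible characteristic polynomial together with a transvection; by Corollary~\ref{cor:hugeimage} this forces $\rho_{A,\ell}(G_F)\supseteq\Sp(A[\ell](\overline F))$. Since $F/K'$ is linearly disjoint from $K'(\mu_\ell)/K'$ we have $[F(\mu_\ell):F]=[K'(\mu_\ell):K']=\ell-1$, so $\chi_\ell$ is surjective on $G_F$, and the observation preceding Lemma~\ref{mainsurjprop} yields $\rho_{A,\ell}(G_F)=\GSp(A[\ell](\overline F))$; thus $A$ has big image at $\ell$, as required.

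The step I expect to be the real obstacle is the very first one: producing a single Jacobian $J/K$ with $\Sp_{2n}(\F_\ell)\subseteq\rho_{J,\ell}(G_K)$ simultaneously for all large $\ell$, where Hall and Kowalski's open-image theorem does the essential work (the toric-dimension-one place also contributing the transvection) and the Appendix is needed in order to place $\fs$ inside $\Sp_{2n}(\F_\ell)$. The second point requiring care is the uniformity of the \v Cebotarev step in $K'$: for $K'$ only assumed linearly disjoint from $K''$ one must still know both that $\rho_{J,\ell}(G_{K'})$ is all of $\GSp_{2n}(\F_\ell)$ and that $K'(J[\ell])$ and $K'(J[j])$ remain linearly disjoint over $K'$, so that the level $j$ structures can be manufactured at the same places that carry the Frobenius conditions --- and this rests on the identity $K(J[\ell])\cap K(J[j])=K$, hence ultimately on $K/\Q$ being unramified above $\ell$.
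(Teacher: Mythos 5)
Your argument is correct and follows the same overall strategy as the paper: fix a single Jacobian $J/K$ with full mod-$\ell$ image for $\ell\geq\ell_0$, set $K''=K(J[\ell],J[j])$, use \v{C}ebotarev (Lemma~\ref{cebotarevlem}) over the linearly disjoint base $K'$ to place the elements $\fs$ and $\ft$ from the Appendix in Frobenius conjugacy classes at two auxiliary places carrying level $j$ structures, and then invoke Corollary~\ref{cor:hugeimage} together with surjectivity of the cyclotomic character on $G_F$.

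The one place you depart from the paper is the proof that $K(J[\ell])$ and $K(J[j])$ are $K$-linearly disjoint. You establish it by a self-contained argument: nonabelian quotients of $\GSp_{2n}(\F_\ell)$ contain $\PSp_{2n}(\F_\ell)$ as a subquotient and hence are too large to coincide with quotients of $\Gal(K(J[j])/K)\subseteq\GSp_{2n}(\Z/j\Z)$ once $\ell$ is large; then $L_0=K(J[\ell])\cap K(J[j])$ is abelian over $K$, hence contained in $K(\mu_\ell)$, and a comparison of ramification above $\ell$ (totally ramified vs.\ unramified, using good reduction of $J$ above $\ell$ and N\'eron--Ogg--Shafarevich) forces $L_0=K$. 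The paper obtains the same conclusion with no work at all, by applying Theorem~1 of \cite{Hall} to $J\otimes_K K(J[j])$ over the base $K(J[j])$ rather than to $J$ over $K$: this yields $\rho_{J,\ell}(G_{K(J[j])})=\GSp_{2n}(\F_\ell)$, from which linear disjointness (and big image over $K$) is immediate. Your detour is valid, but it introduces extra hypotheses (good reduction above $\ell$, the size bound involving $|\GSp_{2n}(\Z/j\Z)|$) and more delicate group theory that the paper avoids by the cleverer choice of base field in the single invocation of Hall's theorem. Finally, your hedge ``after possibly replacing $K$ by a finite extension'' at the outset is unnecessary --- the Appendix of \cite{Hall} produces the hyperelliptic curve directly over the given $K$ --- and it would be awkward if actually used, since Proposition~\ref{localsurjprop} is a statement about the given $K$ and does not descend trivially from a finite extension.
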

\begin{proof}
By the appendix of \cite{Hall}, there exists a hyperelliptic curve of genus $n$ above $K$ whose Jacobian $J$ has trivial endomorphism ring over $\overline{K}$ and satisfies Hall's condition
\begin{eqnarray*}
(T)&\quad:\quad&\parbox[l]{12cm}{\textup{There exists a finite extension $L/K$ such that the N\'eron model of $J\otimes_KL$ over $\O_L$ has a semi-stable fiber with toric dimension one.}}
\end{eqnarray*}
This property is preserved under finite extensions of $K$. By Theorem 1 of \cite{Hall}, there exists a natural number $\ell_0$ such that for any prime number $\ell\geq\ell_0$, $J\otimes_KK(J[j])$ has big image at $\ell$. Then $J$ has big image at $\ell$ as well, and the fields $K(J[\ell])$ and $K(J[j])$ are $K$-linearly disjoint. After enlarging $\ell_0$, we may in addition assume that $K/\Q$ is unramified at all primes $\ell\geq\ell_0$; then $[K(\mu_\ell):K]=\ell-1$ for all primes $\ell\geq\ell_0$, and hence $\ell_0$ satisfies the conclusion of Lemma \ref{mainsurjprop} for $K$. By Proposition \ref{prop:appendix} of the appendix to this paper, we may, after possibly further enlarging $\ell_0$ (so that $\ell_0\geq \max\{5, p:p\vert n\}$), assume that for every prime $\ell\geq\ell_0$, $\GSp(J[\ell](\overline{K}))$ contains an irreducible element $\mathfrak{s}_\ell$ of nonzero trace and a transvection $\mathfrak{t}_\ell$. Let us now fix a prime number $\ell\geq\ell_0$, let us set $K'':=K(J[\ell],J[j])$, and let $K'/K$ be a finite field extension that is linearly disjoint from $K''/K$; then $J\otimes_KK'$ has big image at $\ell$, and $[K'(\mu_\ell):K']=\ell-1$. By Lemma \ref{cebotarevlem}, there exist finite places $\nu_s$ and $\nu_t$ of $K'$ away from any given finite set of finite places of $K'$ such that the image of $J_{\nu_s}:=J\otimes_KK'_{\nu_s}$ at $\ell$ contains $\mathfrak{s}_\ell$, such that the image of $J_{\nu_t}:=J\otimes_KK'_{\nu_t}$ at $\ell$ contains $\mathfrak{t}_\ell$ and such that both $J_{\nu_s}$ and $J_{\nu_t}$ admit a full symplectic level $j$ structure. Indeed, $K'(J[\ell])$ and $K'(J[j])$ are $K'$-linearly disjoint because $K(J[\ell])$ is $K$-linearly disjoint to $K(J[j])$ and because $K'$ is $K$-linearly disjoint to $K''=K(J[\ell],J[j])$. Let us set $S:=\{\nu_s,\nu_r\}$, and let $(J_\nu\,;\,\nu\in S)$ be the resulting local AV-datum with level $j$ structure. Let now $(F,A)$ be an $\ell$-torsion approximation of $(J_\nu\,;\,\nu\in S)$ such that $F$ and $K'(\mu_\ell)$ are $K'$-linearly disjoint. Then $[F(\mu_\ell):F]=\ell-1$, so $F$ satisfies the conclusion of Lemma \ref{mainsurjprop}, and the image of $A$ at $\ell$ contains the symplectic group by Theorem \ref{Principal}. It follows that $A$ has big image at $\ell$, as desired.
\end{proof}

Let us note that we can always take $K'=K$ in the statement of Proposition \ref{localsurjprop}. However, it will prove useful later to have the full strength of Proposition \ref{localsurjprop} at one's disposal, i.e.\ to be able to obtain a local AV-datum as in the conclusion of  Proposition \ref{localsurjprop} over a rather general finite field extension $K'$ of $K$.

\begin{rem}\label{constantremark}
The constant $\ell_0$ from Proposition \ref{localsurjprop} is explicit, and it depends only on $K$, $n$ and $j$. Indeed, given $K$ and $n$, we can fix a hyperelliptic curve $C$ of genus $n$ defined over $K$ with trivial endomorphism ring and satisfying Hall's condition (cf. Appendix to \cite{Hall}). Denote by $J_C$ its Jacobian. Fix $j\geq 1$; there are explicit formulas for the field $K(J_C[j])$ (cf. \cite{Cantor1994}). Let $p_0$ be the biggest prime number that ramifies in $K(J_C[j])$. Theorem 1 of \cite{Hall} provides an explicit constant $\ell'_0$ depending only on $J_C$ and $K(J_C[j])$ such that 
for all $\ell\geq \ell'_0$, $\rho_{J, \ell}:G_{K(J_C[j])}\rightarrow \GSp_{2n}(\mathbb{F}_{\ell})$ is surjective. In the proof of Proposition \ref{localsurjprop} above, we have taken
\[
\ell_0=\max\{\ell'_0, p_0, 5, p: p\vert n\}\;;
\]
hence our constant $\ell_0$ is effective.
\end{rem}

\subsection{Local conditions enforcing tameness}

\begin{prop}\label{localtameprop}
Let $K$ be a number field, let $n,j\geq 1$ be integers, let $S,S'$ be finite disjoint sets of finite places of $K$, and let $\ell$ be a prime avoiding both $j$ and $S'$. There exists a finite extension $K'/K$ together with an $n$-dimensional local AV-datum with level $j$ structure $(A_\nu\,;\,\nu\in T)$ over $K'$, where $T$ denotes the set of places of $K'$ over $S$, such that $K'/K$ is unramified in the places above $\ell$, such that $K'/K$ is totally ramified over $S'$ and such that the $\ell$-torsion representations of the $A_\nu$ are tamely ramified.
\end{prop}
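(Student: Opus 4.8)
The plan is to construct $K'/K$ as a compositum of two kinds of local conditions: a tamely ramified part handling the places in $S'$, and an unramified part handling the places in $S$, all chosen to avoid $\ell$. First I would treat $S'$: since $\ell$ avoids $S'$, for each $\mathfrak{q}\in S'$ the residue characteristic $p_\mathfrak{q}$ is different from $\ell$, so one can pick a uniformizer and adjoin a root of it of the right degree to obtain a totally tamely ramified local extension $K'_\mathfrak{q}/K_\mathfrak{q}$; we want a common degree over all $\mathfrak{q}\in S'$, so take the degree to be (a suitable power times) $\prod_{\mathfrak{q}\in S'}(p_\mathfrak{q}^{f_\mathfrak{q}}-1)$-free of the residue characteristics, or more simply fix any integer $e\geq 1$ coprime to all the $p_\mathfrak{q}$ and to $\ell$ and take the degree-$e$ totally tamely ramified extension at each $\mathfrak{q}$. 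At every place $\nu\mid\ell$ and every place in $S$ we impose that $K'$ is unramified (split, even, over $S$). By weak approximation / Krasner together with the standard fact that a finite collection of prescribed local extensions of a number field can be realized by a single global extension (e.g.\ via Moret-Bailly, Theorem \ref{mbthm}, applied to a suitable affine line or norm variety, or directly by a grunwald-type argument), we obtain a finite $K'/K$ which is totally ramified of degree $e$ over $S'$, split over $S$, and unramified above $\ell$.

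The second ingredient is the local AV-datum. For each $\nu\in T$ lying over some place of $S$, I need a principally polarized $n$-dimensional abelian variety $A_\nu/K'_\nu$ with full symplectic level $j$ structure whose $\ell$-torsion representation $\rho_{A_\nu,\ell}$ is tamely ramified. Here I would take $A_\nu=E_\nu^n$ for a suitable elliptic curve $E_\nu$ over $K'_\nu$: the $n$-fold self-product of a principally polarized elliptic curve is principally polarized of dimension $n$, and its $\ell$-torsion representation is the $n$-fold direct sum of that of $E_\nu$, hence tame as soon as $\rho_{E_\nu,\ell}$ is tame. So it suffices to find, over each $K'_\nu$, an elliptic curve with tame mod-$\ell$ representation and a full level-$j$ structure. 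If $\nu\nmid\ell$ this is automatic for $E_\nu$ with good reduction (N\'eron–Ogg–Shafarevich gives unramified, hence tame); and since $\mu_j\subseteq K$ is \emph{not} assumed here, one must be a little careful about the level-$j$ structure — but one can always arrange $\mu_j\subseteq K'_\nu$ after the (harmless, still unramified at $\nu$ and $\ell$) finite extension, or absorb this into the construction of $K'$. For the remaining subtlety: $S$ may contain places above $\ell$. In that case I would instead take $E_\nu$ to have good \emph{supersingular} reduction at $\nu$; then, as recalled in the introduction and in the first author's work \cite{Arias,AriasVila1}, the mod-$\ell$ representation of a supersingular elliptic curve at a place over $\ell$ where the base field is unramified over $\Q_\ell$ is tamely ramified. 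Since we have built $K'$ to be unramified over $\ell$, each $K'_\nu$ with $\nu\mid\ell$ is unramified over $\Q_\ell$, so such an $E_\nu$ (defined over $\Q_p$, even, to guarantee existence) gives a tame $\rho_{E_\nu,\ell}$, and $A_\nu=E_\nu^n$ is the desired abelian variety.

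Putting these together: choose $K'/K$ as above (totally ramified over $S'$ of common degree, split over $S$, unramified above $\ell$, and with $\mu_j$ adjoined), let $T$ be the places of $K'$ over $S$, and for $\nu\in T$ set $A_\nu=E_\nu^n$ with $E_\nu/K'_\nu$ of good ordinary reduction if $\nu\nmid\ell$ and good supersingular reduction if $\nu\mid\ell$, each equipped with a full symplectic level $j$ structure over $K'_\nu$. Then $(A_\nu\,;\,\nu\in T)$ is an $n$-dimensional local AV-datum with level $j$ structure over $K'$ all of whose $\ell$-torsion representations are tamely ramified, as required. The main obstacle I anticipate is the bookkeeping around the level-$j$ structure when $\mu_j\not\subseteq K$ (one needs $K'_\nu$ to contain $\mu_j$ to have a symplectic level structure with values in $\mu_j$, and one must check this does not conflict with being unramified above $\ell$ — it does not, since $\ell$ can be taken coprime to $j$, which is exactly the hypothesis that $\ell$ avoids $j$), together with ensuring the supersingular $E_\nu$ at places over $\ell$ genuinely exists with the level structure over the given unramified local field; this is where the hypothesis "unramified above $\ell$" is used in an essential way, mirroring Hypothesis (H) of \cite{Arias}.
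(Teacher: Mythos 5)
Your overall strategy is the same as the paper's (take $A_\nu=E_\nu^n$ with $E_\nu$ an elliptic curve of good reduction, supersingular when $\nu\mid\ell$; get tameness from Serre and N\'eron--Ogg--Shafarevich; patch local extensions into a global $K'$ via weak approximation and Krasner). But there is a genuine gap in how you handle the level-$j$ structure, and it forces a structural change to your construction of $K'$.

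A full symplectic level $j$ structure on $A_\nu=E_\nu^n$ over $K'_\nu$ requires the \emph{entire} $j$-torsion $E_\nu[j]$ to be rational over $K'_\nu$, not just $\mu_j$ (it is an isomorphism $A_\nu[j]\cong(\Z/j\Z)^{2n}$ compatible with the pairing, so all of $A_\nu[j]$ must be fixed by $G_{K'_\nu}$; rationality of $\mu_j$ then follows but is far from sufficient). You repeatedly say one only needs $\mu_j\subseteq K'_\nu$, and on that basis you impose that $K'$ is \emph{split} over $S$. That cannot work in general: if $K'_\nu=K_\nu$ for $\nu\in S$, there is no room to adjoin $E_\nu[j]$, and an arbitrary good-reduction elliptic curve over $K_\nu$ will not have rational $j$-torsion. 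The paper instead \emph{defines} $K'_\nu:=K_\nu(E_\nu[j])$ for $\nu\in S$; this extension is unramified when $\nu\mid\ell$ (N\'eron--Ogg--Shafarevich, using $\ell\nmid j$), which is exactly what the proposition demands, but for $\nu\in S$ away from $\ell$ it is allowed to be ramified --- the statement only asks for $K'/K$ unramified above $\ell$, not unramified over all of $S$, and your insistence on ``split over $S$'' is both unachievable and unnecessary.

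This also invalidates your treatment of the common degree. You propose to ``fix any integer $e\geq 1$ coprime to all the $p_\mathfrak{q}$ and to $\ell$'' and use degree-$e$ tame totally ramified extensions over $S'$. But the degree of $K'/K$ is not a free parameter: it is pinned down by the extensions $K_\nu(E_\nu[j])/K_\nu$ at $\nu\in S$, which must first be padded by unramified extensions to a common degree $r$, and only then does one take degree-$r$ totally ramified extensions over $S'$. Note also that the proposition does not require $K'/K$ to be \emph{tamely} ramified over $S'$ (possibly wild ramification there is harmless, since $\ell$ avoids $S'$), so restricting $e$ to be coprime to the residue characteristics of $S'$ is another unneeded constraint that could conflict with the value of $r$ forced by the level structure. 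Finally, one should begin by enlarging $S$ to contain all places above $\ell$ (as the paper does), so that the weak-approximation step actually controls ramification at every place above $\ell$ rather than only at those that happen to already lie in $S$.
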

\begin{proof}
We may assume that $S$ contains all places above $\ell$. For every place $\nu\in S$, let us choose  an elliptic curve $E_\nu$ over $\mathbb{Q}$ such that $E_\nu$ has good reduction and such that $E_\nu$ has good supersingular reduction whenever $\nu$ divides $\ell$; for the existence of these elliptic curves see for instance \cite{AriasVila1} Cor.\ 3.6 and Prop.\ 3.7. Let us then set $K_\nu':=K_\nu(E_\nu[j])$. If $\nu$ is a place above $\ell$, the N\'eron-Ogg-Shafarevich criterion implies that the extension $K_{\nu}'/K_{\nu}$ is unramified (recall that $\ell\nmid j$).

After possibly enlarging the $K'_\nu$ by means of  unramified extensions, we may assume that the degrees $[K_\nu':K_\nu]$ all coincide; let $r$ denote this common degree. For each $\nu\in S'$, let $K'_\nu/K_\nu$ be a totally ramified extension of degree $r$, obtained for instance by extracting an $r$-th root of a uniformizer of $K_\nu$. There exists a finite extension $K'/K$ of degree $r$ such that the induced local extensions at $S\cup S'$ coincide with the $K'_\nu/K_\nu$: indeed, for each $\nu\in S\cup S'$, let $\alpha_\nu$ be a primitive element for $K'_\nu/K_\nu$, and let $f_\nu\in K_\nu[X]$ be its minimal polynomial. By the weak approximation theorem (cf.\ \cite{bourbaki_ca} Chap.\ VI \S 7 No.\ 3 Thm.\ 2), there exists a monic polynomial $f\in K[X]$ of degree $r$ which approximates the $f_\nu$  simultaneously, up to a precision such that Krasner's Lemma (cf.\ \cite{blr} 3.4.2 Prop.\ 3 and Cor.\ 4) applies; we then set $K'=K[x]/(f)$. By construction, $K'/K$ has the desired ramification behavior. For each $\nu\in S$, let us consider the abelian variety $A_\nu=E_\nu^n$ over $K_\nu'$. Then $A_\nu$ admits a full symplectic level $j$ structure, and furthermore, the $\ell$-torsion representation of $A_\nu$ is tamely ramified. For $\nu|\ell$, this follows from \cite{Serre1972} Prop.\ 13, and for $\nu\in S$ not dividing $\ell$, the N\'eron-Ogg-Shafarevich criterion (cf.\ \cite{serre-tate} Theorem 1) even guarantees that $\rho_{A_\nu,\ell}$ is unramified.
\end{proof}

\begin{rem}
In the Situation of Proposition \ref{localtameprop}, if $S$ is the set of places of $K$ lying above the set of rational primes dividing the order of $\GSp_{2n}(\F_\ell)$ and if $(F,A)$ is an $\ell$-torsion approximation of $(A_\nu\,;\,\nu\in T)$, then the $\ell$-torsion representation of $A$ is everywhere tamely ramified.
\end{rem}

\subsection{Existence of $\ell$-torsion approximations with big image}

\begin{prop}\label{prop:step2}
Let $K$ be a number field, and let $n\geq 1$, $j\geq 3$ be integers such that $\mu_j(\overline{K})\subseteq K$. Then there exists a constant $\ell_0$ such that for every prime number $\ell\geq \ell_0$, there exists a finite extension $K''/K$ such that for every finite extension $K'/K$ that is linearly disjoint to $K''/K$, every $n$-dimensional local AV-datum with level $j$ structure over $K'$ admits an $\ell$-torsion approximation with big image at $\ell$.
\end{prop}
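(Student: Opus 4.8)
The plan is to assemble Proposition \ref{localsurjprop} and Theorem \ref{thm:step1}. I would take as $\ell_0$ and as $K''/K$ exactly the constant and the finite extension produced by Proposition \ref{localsurjprop} applied to $K$, $n$ and $j$. Recall what this gives: for every prime $\ell\geq\ell_0$, every finite extension $K'/K$ linearly disjoint from $K''/K$, and every prescribed finite set of finite places of $K'$, there is a two-element set $S$ of finite places of $K'$ disjoint from the prescribed set, together with an $n$-dimensional local AV-datum with level $j$ structure $(J_\nu\,;\,\nu\in S)$ over $K'$, such that every $\ell$-torsion approximation $(F,A)$ of $(J_\nu\,;\,\nu\in S)$ with $F/K'$ linearly disjoint from $K'(\mu_\ell)/K'$ has big image at $\ell$.

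Now fix $\ell\geq\ell_0$ and a finite extension $K'/K$ linearly disjoint from $K''/K$, and let $(B_\nu\,;\,\nu\in S_0)$ be an arbitrary $n$-dimensional local AV-datum with level $j$ structure over $K'$. I would invoke the property above with the prescribed set of places taken to be $S_0$, obtaining a two-element set $S$ of finite places of $K'$ with $S\cap S_0=\emptyset$ and a local AV-datum $(J_\nu\,;\,\nu\in S)$ as described. Concatenating, one gets an $n$-dimensional local AV-datum with level $j$ structure $(C_\nu\,;\,\nu\in S_0\cup S)$ over $K'$, where $C_\nu=B_\nu$ for $\nu\in S_0$ and $C_\nu=J_\nu$ for $\nu\in S$; this is well defined precisely because $S_0\cap S=\emptyset$. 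Since $\mu_j(\overline{K'})\subseteq K'$ and $j\geq 3$, Theorem \ref{thm:step1} applies with base field $K'$ and auxiliary extension $K'(\mu_\ell)/K'$, producing an $\ell$-torsion approximation $(F,A)$ of $(C_\nu\,;\,\nu\in S_0\cup S)$ with $F/K'$ linearly disjoint from $K'(\mu_\ell)/K'$.

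It then remains to read off the conclusion. Both conditions defining an $\ell$-torsion approximation --- total splitting in the place set, and conjugacy of the local $\ell$-torsion representations at each place of $F$ above each place of the set --- are tested place by place, so the restriction of $(F,A)$ to any subset of $S_0\cup S$ is again an $\ell$-torsion approximation; in particular $(F,A)$ is an $\ell$-torsion approximation both of $(B_\nu\,;\,\nu\in S_0)$ and of $(J_\nu\,;\,\nu\in S)$. Since $F/K'$ is linearly disjoint from $K'(\mu_\ell)/K'$, the defining property of $(J_\nu\,;\,\nu\in S)$ forces $A$ to have big image at $\ell$; and $(F,A)$ approximates the given datum $(B_\nu\,;\,\nu\in S_0)$, which is the assertion. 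I do not expect a genuine obstacle here: all the arithmetic-geometric weight has already been carried by Proposition \ref{localsurjprop} (via Hall's results and the appendix) and by Theorem \ref{thm:step1} (via Kisin's local constancy and Moret-Bailly's theorem), and what is left is the combinatorial assembly plus a handful of routine compatibility checks, the only one worth pausing on being the place-by-place restriction property of $\ell$-torsion approximations used above.
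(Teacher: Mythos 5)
Your proof is correct and takes essentially the same route as the paper: fix $\ell_0$, $K''$ from Proposition \ref{localsurjprop}, splice the given datum with the auxiliary big-image datum $(J_\nu)$ on a disjoint set of places, apply Theorem \ref{thm:step1} to the combined datum with auxiliary extension $K'(\mu_\ell)/K'$, and read off both conclusions via the place-by-place restriction property of $\ell$-torsion approximations. The only difference is that you make the final restriction step explicit, which the paper states without comment.
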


\begin{proof}
Let us choose $\ell_0$ as in Proposition \ref{localsurjprop}, let $\ell\geq\ell_0$ be a prime number, let $K''/K$ be as in Proposition \ref{localsurjprop}, let $K'/K$ be a finite extension that is linearly disjoint to $K''/K$, and let $(A_\nu\,;\,\nu\in S)$ be an $n$-dimensional local AV-datum with level $j$ structure over $K'$. By Proposition \ref{localsurjprop}, there exists an $n$-dimensional local AV-datum with level $j$ structure $(J_\nu\,;\,\nu\in S')$ over $K'$ such that $S'$ is disjoint to $S$ and such that the conclusion of Proposition \ref{localsurjprop} holds. For $\nu\in S'$, let us write $A_\nu:=J_\nu$. By Theorem \ref{thm:step1}, there exists an $\ell$-torsion approximation $(F,A)$ of $(A_\nu\,;\,\nu\in S\cup S')$ such that $F$ is linearly disjoint to $K'(\mu_\ell)$ over $K'$. Then $(F,A)$ is an $\ell$-torsion approximation of both $(A_\nu\,;\,\nu\in S)$ and $(J_\nu\,;\,\nu\in S')$; by Proposition \ref{localsurjprop}, it follows that $A$ has big image at $\ell$.
\end{proof}

Let us note that in the statement of Proposition \ref{prop:step2}, we can always choose $K'=K$.

\section{Proof of the main result}\label{proofsec}




We can now give the proof of Theorem \ref{mainthm}. Let us restate it:
\begin{thm}\label{mainthm2}
Given a number field $K$ and an integer $n \geq 1$, there exists an integer $\ell_0$ such
that for all prime numbers $\ell\geq \ell_0$, there exist a finite extension $F$ of $K$, unramified in all places above $\ell$, and an $n$-dimensional abelian variety $A$ defined over $F$ such that the $\ell$-torsion representation of $A$ is surjective and everywhere tamely ramified.
\end{thm}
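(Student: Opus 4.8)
The plan is to assemble the results of Section \ref{sec:new}. First I would fix $j=3$ and reduce to the case $\mu_j(\overline{K})\subseteq K$: if the theorem holds for the number field $K(\mu_j)$, then for any prime $\ell\nmid j$, a finite extension $F/K(\mu_j)$ that is unramified above $\ell$ is also finite and unramified above $\ell$ over $K$ (since $K(\mu_j)/K$ is unramified above $\ell$), and neither $A/F$ nor its $\ell$-torsion representation is affected by the change of base field; so I may assume $\mu_j(\overline{K})\subseteq K$. Then I would apply Proposition \ref{prop:step2} to $(K,n,j)$ to obtain a constant $\ell_0$, which I enlarge so that $\ell_0>j$. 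Fixing a prime $\ell\geq\ell_0$ and letting $K''/K$ be the field provided by Proposition \ref{prop:step2}, I would replace $K''$ by its Galois closure over $K$; since disjointness from a larger field is a stronger condition, the conclusion of Proposition \ref{prop:step2} still holds, and now $K''/K$ is Galois.

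Next I would set up local data enforcing tameness. Let $S$ be the set of finite places of $K$ lying above the rational primes dividing the order of $\GSp_{2n}(\F_\ell)$; it contains every place above $\ell$. I choose a finite place $\mathfrak{q}$ of $K$ outside $S$ that is unramified in $K''/K$ (all but finitely many places qualify), and set $S':=\{\mathfrak{q}\}$. Then $S$ and $S'$ are disjoint, $\mathfrak{q}$ lies above a rational prime different from $\ell$, and $\ell\nmid j$, so Proposition \ref{localtameprop} applies to $(K,n,j,S,S',\ell)$: it yields a finite extension $K'/K$, unramified above $\ell$ and totally ramified over $\mathfrak{q}$, together with an $n$-dimensional local AV-datum with level $j$ structure $(A_\nu\,;\,\nu\in T)$ over $K'$, where $T$ is the set of places of $K'$ above $S$, such that each $\rho_{A_\nu,\ell}$ is tamely ramified. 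The key bookkeeping observation is that $K'$ and $K''$ are linearly disjoint over $K$: indeed, $K'/K$ is totally ramified at $\mathfrak{q}$ while $K''/K$ is unramified there, so $K'\cap K''$ --- being a subextension of each --- is both totally ramified and unramified over $K$ at $\mathfrak{q}$, hence equals $K$; and $K''/K$ being Galois, this gives linear disjointness.

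Consequently, Proposition \ref{prop:step2} applies to the datum $(A_\nu\,;\,\nu\in T)$ over $K'$ and produces an $\ell$-torsion approximation $(F,A)$ of $(A_\nu\,;\,\nu\in T)$ with big image at $\ell$, so that $\rho_{A,\ell}\colon G_F\to\GSp(A[\ell](\overline{F}))$ is surjective. It remains to read off tameness and the ramification behaviour above $\ell$. Since $(F,A)$ approximates a tame local AV-datum supported exactly at the places above the primes dividing the order of $\GSp_{2n}(\F_\ell)$, the remark following Proposition \ref{localtameprop} shows that $\rho_{A,\ell}$ is everywhere tamely ramified. By the definition of an $\ell$-torsion approximation, $F/K'$ is totally split at every place of $T$, in particular at every place of $K'$ above $\ell$, so $F/K'$ is unramified above $\ell$; combined with the unramifiedness of $K'/K$ above $\ell$, this shows that $F/K$ is finite and unramified above $\ell$. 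Hence $F$ and the $n$-dimensional abelian variety $A/F$ have all the properties asserted in the theorem.

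I do not expect any essential obstacle here: the genuine content --- producing tame local data at $\ell$ from supersingular reduction, prescribing local extensions via weak approximation and Krasner's Lemma, Kisin's local constancy in $p$-adic families, Moret-Bailly's theorem on global points, and Hall's big-image criterion together with the symplectic-polynomial input of the Appendix --- has already been discharged in the earlier sections, so the proof of Theorem \ref{mainthm2} is essentially an orchestration. The one point that genuinely needs care is that Propositions \ref{prop:step2} and \ref{localtameprop} each impose a linear-disjointness constraint on the auxiliary fields, and these must be made compatible; the device for this is the auxiliary place $\mathfrak{q}$, whose total ramification in $K'/K$ forces disjointness from the Galois field $K''$.
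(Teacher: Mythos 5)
Your proposal is correct and follows essentially the same architecture as the paper's own proof: fix $j\geq 3$, reduce to $\mu_j\subseteq K$, invoke Proposition~\ref{prop:step2} to get $\ell_0$ and $K''$, build a tame local AV-datum via Proposition~\ref{localtameprop} with an auxiliary place $\mathfrak{q}$ forcing linear disjointness of $K'$ from $K''$, then feed it back into Proposition~\ref{prop:step2}. The only (inessential) variation is the bookkeeping device for linear disjointness: the paper leaves $K''$ as is and cites \cite{fried_jarden} Lemma~2.5.8 to conclude that a totally-ramified and an unramified extension are linearly disjoint, whereas you pass to the Galois closure of $K''$ (correctly noting this strengthens the disjointness hypothesis, so the conclusion of Proposition~\ref{prop:step2} is preserved) and then use that trivial intersection with a Galois extension gives linear disjointness; both are sound, and yours is slightly more self-contained.
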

\begin{proof}
Let us fix any $j\geq 3$. If $K'/K$ is a finite extension and if $\ell_0'$ is a constant such that the statement of the theorem holds for $K'$ and $\ell_0'$, then the theorem holds for $K$ and any constant $\ell_0\geq\ell_0'$ such that $K'/K$ is unramified in all places above the rational primes $\ell\geq\ell_0$. We may thus replace $K$ by a finite extension and hereby assume that $\mu_j(\overline{K})\subseteq K$. Let $\ell_0$ be the constant given by Proposition \ref{prop:step2}. After possibly enlarging $\ell_0$, we may assume that $\ell_0>j$; then $j$ is coprime to any prime number $\ell\geq\ell_0$. Let us fix a prime number $\ell\geq\ell_0$, let $K''/K$ be the finite extension given by Proposition \ref{prop:step2}, let $S$ denote the set of places of $K$ dividing the order of $\GSp_{2n}(\F_\ell)$; then $S$ contains all places above $\ell$. Let moreover $S'$ be the set consisting of a single finite place $\mu$ of $K$ away from $S$ such that $K''/K$ is unramified at $\mu$. Let $K'/K$ and $(A_\nu\,;\,\nu\in T)$ be the associated data given by Proposition \ref{localtameprop}; then $K'/K$ is totally ramified at $\mu$, while $K''/K$ is unramified in $\mu$, and it follows from \cite{fried_jarden} Lemma 2.5.8 that $K'$ and $K''$ are linearly disjoint over $K$. By Proposition \ref{prop:step2}, there exists an $\ell$-torsion approximation $(F,A)$ of $(A_\nu\,;\,\nu\in T)$ with big image at $\ell$. Now $F/K'$ is totally split over $T$, $K'/K$ is unramified in the places above $\ell$, and $T$ contains all places of $K'$ above $\ell$. The pair $(F,A)$ thus has the desired properties.
\end{proof}

Our methods can be used to prove the following strengthening of Theorem \ref{mainthm}:
\begin{thm}
Let $K$ be a number field, let $n\geq 1$ be an integer, and let $\ell_0$ be the constant given by Theorem \ref{mainthm2} and its proof, for some $j\geq 3$. Then for all primes $\ell\geq \ell_0$, the pairs $(F, A)$ satisfying the conclusion of Theorem \ref{mainthm} lie Zariski-dense in the moduli-space $\sA=\sA_{n,1,j/K}$ (cf.\ Remark \ref{someremarks} ($ii$)). In particular, there exist infinitely many pairwise geometrically non-isomorphic such pairs.
\end{thm}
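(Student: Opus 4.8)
The plan is to revisit the proof of Theorem \ref{mainthm2} and observe that every step is compatible with imposing that the point $y\in\sA(F)$ lies in a prescribed nonempty Zariski-open subset $W$ of $\sA$, or more precisely, that it can be chosen inside a prescribed nonempty $\nu$-adically open set at one additional place. The key mechanism is that Moret-Bailly's theorem (Theorem \ref{mbthm}) allows us to enlarge the finite set $S'$ of local conditions by an arbitrary extra finite place $\mu_0$ with an arbitrary nonempty $\mu_0$-adically open target $\Omega_{\mu_0}\subseteq\sA(K_{\mu_0})$. So first I would fix $j\geq 3$ and, as in the proof of Theorem \ref{mainthm2}, reduce to the case $\mu_j(\overline K)\subseteq K$; then, given any nonempty Zariski-open $W\subseteq\sA_{n,1,j/K}$, I would pick a finite place $\mu_0$ of $K$ (away from $S\cup S'$ and unramified in all the auxiliary fields) such that $W(K_{\mu_0})\neq\emptyset$, which is possible because $W$ is a nonempty open subscheme of a geometrically connected $K$-variety of positive dimension and hence has $K_{\mu_0}$-points for infinitely many $\mu_0$ (e.g.\ by the Lang--Weil estimates applied to the special fiber, or simply because $W$ is smooth and nonempty over a dense set of residue fields). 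I would then set $\Omega_{\mu_0}=W(K_{\mu_0})$, which is $\mu_0$-adically open in $\sA(K_{\mu_0})$, and carry this extra condition through the applications of Theorem \ref{thm:step1} (hence of Theorem \ref{mbthm}) inside the proofs of Propositions \ref{localsurjprop} and \ref{prop:step2}.

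The second step is bookkeeping: I would check that adjoining the place $\mu_0$ with its target $\Omega_{\mu_0}$ does not disturb any of the other requirements. The place $\mu_0$ is away from $S$ (the places dividing $|\GSp_{2n}(\F_\ell)|$) and from the \v Cebotarev places $\nu_s,\nu_t$, so it imposes no tameness or big-image constraint; it only forces $y_{\mu_0}\in W(K_{\mu_0})$, i.e.\ $y\in W$. Since the abelian variety $A/F$ attached to $y$ is determined by the image of $y$ in $\sA$, and $W$ is $K$-open, the resulting point of $\sA_{n,1,j}$ lies in $W$. Running the argument for every member $W$ of a countable basis of nonempty Zariski-opens of $\sA$ (or, more cleanly, observing that the set of achievable points meets every nonempty open) shows that the pairs $(F,A)$ realizing the conclusion of Theorem \ref{mainthm} are Zariski-dense in $\sA$. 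Finally, Zariski-density in a positive-dimensional irreducible variety immediately yields infinitely many points with pairwise distinct images in $\sA$; since two such pairs $(F,A)$, $(F',A')$ with $A\cong_{\overline K}A'$ (as polarized varieties with level $j$ structure) would define the same geometric point of $\sA$, distinctness of images gives geometric non-isomorphism, so there are infinitely many pairwise geometrically non-isomorphic such pairs.

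The main obstacle I anticipate is purely the density of local points: one must ensure that for a nonempty Zariski-open $W\subseteq\sA$ there exists a place $\mu_0$ of $K$ (avoiding a prescribed finite bad set, and unramified in the relevant auxiliary extensions) with $W(K_{\mu_0})\neq\emptyset$, and moreover that this $\mu_0$ can be kept disjoint from $S$, $S'$, the \v Cebotarev places and the ramification loci of $K'$, $K''$. This is handled by a standard argument: $W$ is smooth and nonempty over $K$, so it spreads out to a smooth scheme $\mathcal W$ over $\Spec\O_{K}[1/N]$ for suitable $N$ with nonempty fibers; by Lang--Weil and Hensel, for all but finitely many primes $\p$ of $\O_K$ the reduction $\mathcal W_{\F_\p}$ has an $\F_\p$-point which lifts to $\mathcal W(\O_{K_\p})\subseteq W(K_\p)$, and one simply chooses $\p=\mu_0$ outside the (finite) union of all the bad sets. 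One small subtlety worth flagging is that $\sA_{n,1,j/K}$ has positive dimension ($\dim = n(n+1)/2 \geq 1$), so nonempty Zariski-opens are genuinely "large"; this is what makes the density statement non-vacuous and also what yields the "infinitely many" clause. Apart from this, the proof is a direct strengthening of the argument already given, obtained by enlarging the local datum by one harmless extra place.
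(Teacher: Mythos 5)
Your proof is correct, but it takes a genuinely different route from the paper. The paper argues by contradiction: it assumes all achievable points lie on a proper closed $V\subsetneq\sA$, sets $U=\sA\setminus V$, and then applies Moret--Bailly's theorem (Theorem \ref{mbthm}) \emph{to the open subvariety $U$ itself}, replacing each $\Omega_\nu$ by $\Omega_\nu\cap U(K_\nu)$; non-emptiness of these intersections is for free, because $\Omega_\nu$ is $\nu$-adically open and $V(K_\nu)$ cannot contain a nonempty $\nu$-adic open by dimension considerations. This produces an $F$-point of $U$, a contradiction. You instead keep the ambient variety equal to $\sA$ and enlarge the local datum by one extra place $\mu_0$ with $\Omega_{\mu_0}=W(K_{\mu_0})$, invoking Lang--Weil plus Hensel to find a place $\mu_0$ (outside the various bad sets) with $W(K_{\mu_0})\neq\emptyset$. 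Both arguments are sound; the paper's is slightly leaner in that it avoids Lang--Weil entirely and does not add new places or bookkeeping about where $\mu_0$ must avoid ramification, while your approach is a bit more flexible (it shows directly how to hit any prescribed nonempty Zariski-open, without the contradiction framing) at the cost of one more nontrivial ingredient. Your handling of the final ``infinitely many geometrically non-isomorphic pairs'' clause is the same as the paper's: distinct points of $\sA$ give geometrically non-isomorphic polarized abelian varieties with level structure.
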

\begin{proof}
Indeed, let us assume that all these points lie on a proper closed subvariety $V\subsetneq \sA$, and let $U$ denote the complement of $V$ in $\sA$. We argue exactly as above, except that in the proof of Theorem \ref{thm:step1}, we apply Theorem \ref{mbthm} to $U$ and to the intersections
\[
\Omega_\nu'\,:=\,\Omega_\nu\cap U(K_\nu)\;;
\]
Theorem \ref{mbthm} applies to this input data because $U$ is smooth and geometrically irreducible and because the sets $\Omega'_\nu$ are again open and non-empty, where non-emptiness follows by dimension reasons from the openness of the $\Omega_\nu$. The pair $(F,A)$ that is produced by the proof of Theorem \ref{mainthm2} then defines an $F$-valued point of $U$, contrary to our assumption.
\end{proof}

\begin{rem}
One may wonder whether for a fixed finite extension $F/K$ that is unramified in the places above $\ell$, the pairs $(F,A)$ satisfying the conditions of Theorem \ref{mainthm2} lie dense in the moduli, or one may ask the weaker question whether there exist infinitely many pairwise non-isomorphic such pairs. At present we do not know how to obtain such a result.
\end{rem}

\section{Appendix}

\begin{defn} Let $q$ be a power of a prime $p$, and let
$\mathbb{F}_q$ be the field with $q$ elements. Let $f\in \F_q[x]$ be a monic polynomial. 
\begin{enumerate}
\item Write $f(x)=x^r + a_{r-1}x^{r-1}+ \cdots + a_1 x + a_0$. We say that $a_{r-1}$ is the \emph{trace} of $f(x)$.
\item Assume the degree of $f$ is $2n$ for some $n\in \mathbb{N}$. We say that $f$ is \emph{symplectic} if it satisfies that $a_i=a_{2n-i}$ for all $i=1, \dots, n$ and $a_0=1$, that is, if $f(x)$ has the shape

$$x^{2n} + a_1 x^{2n-1} + \cdots + a_{n-1} x^{n+1} + a_n x^n + a_{n-1}x^{n-1} + \cdots + a_1 x + 1.$$
It is easily seen that $f(x)$ is symplectic if and only if it
satisfies the relation $$x^{2n}f\left(\frac{1}{x}\right)=f(x)$$ in
the field $\F_q(x)$.
 \end{enumerate}
\end{defn}

In this appendix, we give a proof of the following result:

\begin{prop}\label{prop:appendix} For any positive integer
$n\geq 1$, for all prime numbers $p\nmid n$ and for all $r\in\N$ such that $p^r\geq 5$, the ring $\F_{p^r}[X]$ contains an
irreducible symplectic polynomial of nonzero trace and of degree
$2n$.
\end{prop}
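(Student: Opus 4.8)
The plan is to count, via a Möbius-type inclusion-exclusion, the number of monic symplectic polynomials of degree $2n$ over $\F_q$ (with $q = p^r$) that are irreducible and have nonzero trace, and to show this count is positive once $q \geq 5$ and $p \nmid n$. The starting point is the classical correspondence: a monic irreducible symplectic polynomial $f$ of degree $2n$ corresponds, via its roots, to a $\Gal(\overline{\F_q}/\F_q)$-orbit of size $2n$ inside $\overline{\F_q}^\times$ that is stable under $x \mapsto x^{-1}$; equivalently, $f$ is the minimal polynomial of some $\alpha \in \F_{q^{2n}}$ of degree exactly $2n$ over $\F_q$ such that $\alpha^{-1}$ is also a $\Gal$-conjugate of $\alpha$. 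The inversion $x \mapsto x^{-1}$ is realized on a degree-$2n$ orbit precisely when $\alpha^{-1} = \alpha^{q^n}$, i.e.\ $\alpha^{q^n + 1} = 1$; so the relevant $\alpha$ are the elements of $\mu_{q^n+1} \subseteq \F_{q^{2n}}^\times$ of degree exactly $2n$ over $\F_q$. (One must check $q^n+1 \nmid q^d - 1$ for proper divisors $d \mid 2n$, so that such elements indeed have full degree — this holds for $d \mid n$ trivially, and for $d = 2n/k$ with $k > 1$ odd it follows from elementary order considerations.)

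The main step is then to estimate $N$, the number of $\alpha \in \mu_{q^n+1}$ of degree exactly $2n$ over $\F_q$, and to subtract off those whose minimal polynomial has zero trace. For the first, I would use inclusion-exclusion over divisors: the number of $\alpha \in \mu_{q^n+1}$ lying in some proper subfield is bounded by $\sum_{d} (\gcd(q^n+1, q^d-1))$ over the maximal proper $d$, and each such gcd is small (at most $q^{n/k}+1$ for the relevant $k$), so $N \geq (q^n + 1) - O(q^{n/2} \cdot (\text{number of divisors of } 2n))$, hence $N \geq c\, q^n$ for an absolute $c > 0$ once $q$ is bounded below. Then $N / (2n)$ counts the symplectic irreducibles of degree $2n$. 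For the trace condition: the trace of the minimal polynomial of $\alpha$ is (up to sign) the $\F_{q^{2n}}/\F_q$-trace $\tr(\alpha)$ divided by... no — more carefully, $a_1 = -(\alpha + \alpha^q + \cdots + \alpha^{q^{2n-1}}) = -\tr_{\F_{q^{2n}}/\F_q}(\alpha)$. So I need the number of $\alpha \in \mu_{q^n+1}$ of full degree with $\tr_{\F_{q^{2n}}/\F_q}(\alpha) = 0$ to be strictly less than $N$. Since $\{\alpha : \tr_{\F_{q^{2n}}/\F_q}(\alpha) = 0\}$ is an $\F_q$-hyperplane, its intersection with $\mu_{q^n+1}$ has size at most roughly $(q^n+1)/q + O(\sqrt{q^n})$ by a character-sum (Weil) bound, which is genuinely smaller than $N$ for $q$ large.

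The condition $p \nmid n$ should enter exactly here, in ensuring the trace-zero locus does not accidentally contain \emph{all} of the relevant $\alpha$: if $p \mid n$ one can have pathological coincidences (e.g.\ Frobenius acting with a fixed structure forcing traces into a subfield), and the cleanest way to rule this out is to note that $\tr_{\F_{q^{2n}}/\F_q}$ restricted to $\mu_{q^n+1}$ is not identically zero precisely when $p \nmid [\F_{q^{2n}} : \F_q]$-type divisibility fails appropriately; I expect to invoke a lemma saying the trace form on a cyclic extension degenerates on a multiplicative subgroup only in characteristic dividing a related index. The genuinely delicate part — and the main obstacle — is making the two estimates (lower bound for $N$, upper bound for the trace-zero count) explicit enough to pin down the threshold $q \geq 5$ rather than merely "$q$ sufficiently large"; this likely requires handling the small cases $q \in \{5, 7, 8, 9, \dots\}$ and small $n$ by a direct or computational argument, and being careful that the Weil bound $O(\sqrt{q^n})$ has a controlled constant. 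For generic $(n,q)$ the soft counting suffices, so I would structure the proof as: (1) the bijective dictionary, (2) the lower bound $N \geq q^n - (\text{small})$, (3) the trace-zero upper bound using $p \nmid n$, (4) conclude $N$ minus trace-zero is positive, (5) dispose of finitely many small cases.
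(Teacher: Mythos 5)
Your dictionary is correct, but it is the paper's Lemma~\ref{lem:relation} read in the other direction: the $\alpha\in\mu_{q^n+1}$ of full degree $2n$ that you count is the paper's $\beta$, a root of $x^2-\gamma x+1$ over $\F_{q^n}$ with $\gamma:=\alpha+\alpha^{-1}$, and your trace identity $\tr_{\F_{q^{2n}}/\F_q}(\alpha)=\tr_{\F_{q^n}/\F_q}(\gamma)$ (from $\alpha^{q^n}=\alpha^{-1}$) is exactly the one the paper proves. The genuine difference lies in where the counting happens. The paper projects along the $2$-to-$1$ map $\alpha\mapsto\alpha+\alpha^{-1}$ and works entirely inside $\F_{q^n}$, where both quantities are computed \emph{exactly}: the number of monic degree-$n$ irreducibles over $\F_q$ with nonzero trace is $\frac{q-1}{qn}\sum_{d\mid n}\mu(d)q^{n/d}$ (Lemma~\ref{lem:Bound1}), and the number of $\gamma\in\F_{q^n}$ with $x^2-\gamma x+1$ reducible over $\F_{q^n}$ is $(q^n+1)/2$ or $q^n/2$ (Lemma~\ref{lem:Bound2}), both by elementary fibre counting. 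Subtracting gives a closed expression in $q$ whose positivity for $q\geq5$ is a two-line estimate with no Weil bounds, no character sums and no small-case analysis. Your approach stays upstairs in $\mu_{q^n+1}$ and replaces both exact counts by asymptotics: inclusion--exclusion over subfields for the full-degree count, and a Kloosterman-type bound for the trace-zero locus.

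The gap is the one you flag yourself: your method does not actually reach the threshold $q\geq5$. The Weil constant is about $2$, so already at $n=1$, $q=5$ your lower bound for (full-degree minus trace-zero) is of the order $(q-1)-(q+1)/q-2\sqrt{q}\approx 4-1.2-4.47<0$, and you must fall back on direct verification for small $(q,n)$ --- precisely the bookkeeping the paper's exact formulas are designed to avoid. You also misplace the hypothesis $p\nmid n$. The Weil estimate for $\#\{\alpha\in\mu_{q^n+1}:\tr_{\F_{q^{2n}}/\F_q}(\alpha)=0\}$ is insensitive to whether $p\mid n$, and no degeneration of the trace form on a multiplicative subgroup occurs. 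In the paper, $p\nmid n$ is used at exactly one elementary point: the translate $f(x-a)$ of a monic degree-$n$ polynomial has trace shifted by $-na$, so the $q$ translates realize all $q$ traces if and only if $p\nmid n$, which is what makes Lemma~\ref{lem:Bound1} an exact formula rather than an inequality. If pushed through with explicit constants your route would likely prove the statement without the hypothesis $p\nmid n$ at all (for $q$ large enough), but as written the proposal neither identifies the actual role of the hypothesis nor closes the bound at $q=5$.
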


The proof will follow from a series of elementary lemmas.

\begin{lem}\label{lem:relation} Let $q$ be a prime power, let $f(x)=x^n + a_{n-1}x^{n-1} + \cdots + a_1 x + a_0\in
\mathbb{F}_q[x]$ be a monic irreducible polynomial,
and let $\alpha\in \overline{\mathbb{F}}_{q}$ be a root. Let $\beta$ be a root of $x^2 - \alpha x + 1$, and let us assume that $\beta\not\in \mathbb{F}_q(\alpha)$.
Then the minimal polynomial of $\beta$ over $\mathbb{F}_q$ is symplectic, and 
\begin{equation*}\mathrm{tr}_{\mathbb{F}_q(\alpha)/\mathbb{F}_q}(\alpha)=\mathrm{tr}_{\mathbb{F}_q(\beta)/\mathbb{F}_q}(\beta).\end{equation*}
\end{lem}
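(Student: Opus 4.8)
The plan is to work with the field tower $\mathbb{F}_q\subseteq\mathbb{F}_q(\alpha)\subseteq\mathbb{F}_q(\beta)=\mathbb{F}_q(\alpha)(\beta)$ and to exploit the explicit quadratic relation $\beta^2-\alpha\beta+1=0$, equivalently $\alpha=\beta+\beta^{-1}$. First I would observe that, since $\beta\notin\mathbb{F}_q(\alpha)$, the polynomial $x^2-\alpha x+1$ is irreducible over $\mathbb{F}_q(\alpha)$, so $[\mathbb{F}_q(\beta):\mathbb{F}_q(\alpha)]=2$ and $\deg_{\mathbb{F}_q}(\beta)=2n$ where $n=\deg f$. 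The Galois conjugate of $\beta$ over $\mathbb{F}_q(\alpha)$ is the other root of $x^2-\alpha x+1$, namely $\beta^{-1}$ (their product is $1$). Hence $\beta^{-1}$ is also a root of the minimal polynomial $g$ of $\beta$ over $\mathbb{F}_q$.

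Next I would show $g$ is symplectic using the characterization $x^{2n}g(1/x)=g(x)$ given in the definition. The key point: the Frobenius $\phi:z\mapsto z^q$ permutes the roots of $g$ transitively (as $g$ is irreducible), and these roots are exactly $\{\beta^{q^i}:0\le i<2n\}$. Because inversion $z\mapsto z^{-1}$ commutes with $\phi$ and sends the root $\beta$ to the root $\beta^{-1}$, inversion maps the full root set of $g$ bijectively onto itself. Therefore $x^{2n}g(1/x)$, which is the monic polynomial whose roots are the inverses of the roots of $g$, equals $g$; this gives symplecticity. (One should note $0$ is not a root since the constant term of $g$, being a product of the $\beta^{q^i}$, is a nonzero element whose value forces the $a_0=1$ normalization — this also follows directly from the palindromic identity once established.)

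For the trace identity, I would use transitivity of the trace in the tower $\mathbb{F}_q(\beta)/\mathbb{F}_q(\alpha)/\mathbb{F}_q$:
\[
\mathrm{tr}_{\mathbb{F}_q(\beta)/\mathbb{F}_q}(\beta)=\mathrm{tr}_{\mathbb{F}_q(\alpha)/\mathbb{F}_q}\bigl(\mathrm{tr}_{\mathbb{F}_q(\beta)/\mathbb{F}_q(\alpha)}(\beta)\bigr).
\]
The inner trace $\mathrm{tr}_{\mathbb{F}_q(\beta)/\mathbb{F}_q(\alpha)}(\beta)$ is the sum of the two roots of $x^2-\alpha x+1$, which is exactly $\alpha$. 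Substituting gives $\mathrm{tr}_{\mathbb{F}_q(\beta)/\mathbb{F}_q}(\beta)=\mathrm{tr}_{\mathbb{F}_q(\alpha)/\mathbb{F}_q}(\alpha)$, as claimed.

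I do not expect a serious obstacle here; the statement is essentially a bookkeeping exercise once one identifies $\beta^{-1}$ as the second root of $x^2-\alpha x+1$. The only point requiring a little care is the symplecticity argument: one must make sure that passing from "inversion permutes the roots of $g$" to "$g$ is palindromic" is clean, which is immediate from the fact that a monic polynomial over a field is determined by its multiset of roots, together with the observation that none of the roots is zero (so inversion is defined on the whole root set). Everything else — the degree computation, the trace transitivity — is standard finite-field algebra.
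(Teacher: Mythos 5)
Your argument is correct in outline but takes a genuinely different route from the paper's. You establish $[\mathbb{F}_q(\beta):\mathbb{F}_q]=2n$, identify the second root of $x^2-\alpha x+1$ as $\beta^{-1}$, argue via Galois conjugation that inversion permutes the full root set of the minimal polynomial $g$ of $\beta$, and then obtain the trace identity by transitivity in the tower $\mathbb{F}_q(\beta)/\mathbb{F}_q(\alpha)/\mathbb{F}_q$. The paper instead writes down the minimal polynomial explicitly as $g(x)=x^n f\bigl(x+\tfrac{1}{x}\bigr)$: this is manifestly symplectic (apply $x\mapsto 1/x$ and multiply by $x^{2n}$), monic of degree $2n$, and has $\beta$ as a root, and the coefficient of $x^{2n-1}$ is read off directly as $a_{n-1}$. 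The explicit construction is shorter and settles both claims at once; your Galois-theoretic route is more conceptual and explains why the roots close up under inversion, at the cost of treating symplecticity and the trace separately.

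There is one small hole in your symplecticity step that you should patch. You assert that $x^{2n}g(1/x)$ is ``the monic polynomial whose roots are the inverses of the roots of $g$,'' but its leading coefficient is the constant term $a_0$ of $g$, not $1$. From ``inversion permutes the roots'' alone one only gets $x^{2n}g(1/x)=a_0\,g(x)$; comparing constant terms then gives $a_0^2=1$, so in odd characteristic $a_0=-1$ is not yet excluded. Your parenthetical is partly circular: you cannot invoke the palindromic identity to conclude $a_0=1$, since $a_0=1$ is exactly what is needed to upgrade $x^{2n}g(1/x)=a_0\,g(x)$ to that identity. The fix is short: $g$ is irreducible of degree $2n\geq 2$, hence has no roots in $\mathbb{F}_q$, so in particular neither $1$ nor $-1$ is a root; inversion is therefore a fixed-point-free involution on the $2n$ roots, the roots pair up as $\{r,r^{-1}\}$, and $a_0=\prod_i r_i=1$. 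Equivalently, the nontrivial element of $\mathrm{Gal}(\mathbb{F}_q(\beta)/\mathbb{F}_q(\alpha))$ is $z\mapsto z^{q^n}$ and sends $\beta$ to $\beta^{-1}$, so
\[
a_0=\prod_{i=0}^{2n-1}\beta^{q^i}=\prod_{i=0}^{n-1}\bigl(\beta\cdot\beta^{q^n}\bigr)^{q^i}=1.
\]
With that addition your proof is complete.
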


\begin{proof} Let us note that $\alpha=\beta + \frac{1}{\beta}$, and let us consider the polynomial $g(x)=x^nf\left(x + \frac{1}{x}\right)\in
\mathbb{F}_q[x]$; then $g(x)$ is a symplectic polynomial satisfying $g(\beta)=0$. Since
$\mathbb{F}_q(\beta)$ has degree $2n$ over $\mathbb{F}_q$ and
since $g(x)$ is a monic polynomial of degree $2n$, $g(x)$ must be the minimal
polynomial of $\beta$ over $\mathbb{F}_q$. Therefore
$\mathrm{tr}_{\mathbb{F}_q(\beta)/\mathbb{F}_q}(\beta)$ is equal to
the coefficient of $x^{2n-1}$ in $g(x)$, that is to say, $a_{n-1}$,
which is precisely
$\mathrm{tr}_{\mathbb{F}_q(\alpha)/\mathbb{F}_q}(\alpha)$.
\end{proof}

To prove Proposition \ref{prop:appendix}, it now suffices to show that for $p$ not dividing $n$ and for $q=p^r\geq 5$, we can find an $\alpha\in \overline{\mathbb{F}}_{q}$ with
$\mathbb{F}_q(\alpha)/\mathbb{F}_q$ of degree $n$ and nonzero trace such that the polynomial $x^2-\alpha x + 1$ is irreducible.

\begin{lem}\label{lem:Bound1} If $p$ is a prime number, if $q$ is a power of $p$ and if $n\geq 1$ such that $p\nmid n$, then the number of monic irreducible polynomials in $\mathbb{F}_q[x]$ of degree $n$ and nonzero
trace is equal to
\begin{equation*} \frac{q-1}{qn}\sum_{d\vert n}\mu(d) q^{\frac{n}{d}}.\end{equation*}
\end{lem}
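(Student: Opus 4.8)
The plan is to pass from polynomials to field elements and then to recover the answer from a divisor sum by Möbius inversion. First I would record that a monic irreducible $f(x)=x^n+a_{n-1}x^{n-1}+\cdots+a_0\in\F_q[x]$ of degree $n$ is the minimal polynomial of exactly $n$ elements of $\overline{\F}_q$, all of degree exactly $n$ over $\F_q$ and all conjugate to one another; if $\alpha$ is one of them then $a_{n-1}=-\mathrm{tr}_{\F_{q^n}/\F_q}(\alpha)$, a quantity which is the same for all $n$ roots since the trace is Galois-invariant. Hence the number we want equals $\tfrac1n\,B_n$, where $B_n$ denotes the number of $\alpha\in\overline{\F}_q$ of degree exactly $n$ over $\F_q$ with $\mathrm{tr}_{\F_{q^n}/\F_q}(\alpha)\neq 0$; more generally write $B_e$ for the number of $\alpha$ of degree exactly $e$ with $\mathrm{tr}_{\F_{q^e}/\F_q}(\alpha)\neq 0$.

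The key point, and the place where the hypothesis $p\nmid n$ is used, is the following observation about intermediate fields. For $e\mid d\mid n$ and $\alpha\in\F_{q^e}$, transitivity of the trace gives $\mathrm{tr}_{\F_{q^d}/\F_q}(\alpha)=\mathrm{tr}_{\F_{q^e}/\F_q}\bigl(\mathrm{tr}_{\F_{q^d}/\F_{q^e}}(\alpha)\bigr)=(d/e)\,\mathrm{tr}_{\F_{q^e}/\F_q}(\alpha)$, and likewise $\mathrm{tr}_{\F_{q^n}/\F_q}(\alpha)=(n/e)\,\mathrm{tr}_{\F_{q^e}/\F_q}(\alpha)$; since $p\nmid n$ the scalars $d/e$ and $n/e$ lie in $\F_q^\times$, so the condition ``$\mathrm{tr}_{\F_{q^d}/\F_q}(\alpha)\neq 0$'' is equivalent to ``$\mathrm{tr}_{\F_{q^n}/\F_q}(\alpha)\neq 0$'' and in particular does not depend on which intermediate field one uses to take the trace down to $\F_q$. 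Because finite field extensions are separable, each $\mathrm{tr}_{\F_{q^d}/\F_q}$ is a surjective $\F_q$-linear form, so its kernel has cardinality $q^{d-1}$ and the set $\{\alpha\in\F_{q^d}:\mathrm{tr}_{\F_{q^d}/\F_q}(\alpha)\neq 0\}$ has cardinality $q^d-q^{d-1}$. Partitioning $\F_{q^d}$ according to the exact degree $e\mid d$ of each element and using the equivalence just established, this set decomposes as a disjoint union over $e\mid d$ of the $B_e$ elements of exact degree $e$ with nonzero trace, which gives the identity $q^d-q^{d-1}=\sum_{e\mid d}B_e$ for every $d\mid n$.

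Möbius inversion then yields $B_n=\sum_{d\mid n}\mu(n/d)\,(q^d-q^{d-1})$. Reindexing $d\mapsto n/d$ in the first summand and writing $\sum_{d\mid n}\mu(n/d)q^{d-1}=\tfrac1q\sum_{d\mid n}\mu(d)q^{n/d}$ in the second, one gets $B_n=\bigl(1-\tfrac1q\bigr)\sum_{d\mid n}\mu(d)q^{n/d}=\tfrac{q-1}{q}\sum_{d\mid n}\mu(d)q^{n/d}$, and dividing by $n$ produces the claimed formula $\tfrac{q-1}{qn}\sum_{d\mid n}\mu(d)q^{n/d}$. The only genuine subtlety is the invertibility of the scalars $n/e$ in the second paragraph: if $p\mid n$ the ``nonzero trace'' condition is no longer stable under passing between intermediate fields, the identity $q^d-q^{d-1}=\sum_{e\mid d}B_e$ breaks down, and the count changes — consistently with the fact that the expression above need not even be an integer when $p\mid n$ (for instance $q=2$, $n=2$).
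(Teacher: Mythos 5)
Your proof is correct, but it takes a genuinely different route from the paper's. The paper quotes the standard count $\frac1n\sum_{d\mid n}\mu(d)q^{n/d}$ of monic irreducibles of degree $n$ and then observes that the shift action $f(x)\mapsto f(x-a)$, $a\in\F_q$, partitions this set into orbits of size exactly $q$, each orbit meeting every trace value in $\F_q$ exactly once (since the shift changes the trace by $-na$ and $n\in\F_q^\times$ because $p\nmid n$); the formula then follows immediately by equidistribution over the $q-1$ nonzero traces. You instead pass to field elements, prove the identity $q^d-q^{d-1}=\sum_{e\mid d}B_e$ by partitioning the nonzero-trace elements of $\F_{q^d}$ by exact degree, and recover $B_n$ by M\"obius inversion. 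Both arguments correctly use $p\nmid n$ — the paper through invertibility of $n$ in the shift action, you through invertibility of the degree ratios $d/e$ which makes the nonzero-trace condition independent of the ambient field. The paper's argument is shorter because it leans on the known irreducible-polynomial count, whereas yours is more self-contained, essentially reproving that count restricted to nonzero trace; as a bonus, your observation that the formula need not be an integer when $p\mid n$ (e.g.\ $q=2,n=2$) is a nice sanity check that the hypothesis is genuinely used.
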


 \begin{proof} The number of monic irreducible  polynomials of degree
$n$ is $\frac{1}{n}\sum_{d\vert n}\mu(d) q^{\frac{n}{d}}$ (cf. Theorem 3.25 of
\cite{LidlNiederreiter}). On the other hand, we can define an equivalence relation on the set of monic irreducible polynomials of degree $n$ in $\mathbb{F}_q[x]$ by declaring that $f\equiv g$ if and only if there exists an $a\in \mathbb{F}_q$ with $f(x)=g(x-a)$. Each equivalence class consists of precisely $q$ elements, and the traces of the representatives of any given class are all distinct. Hence for each $a\in \mathbb{F}_q$, the cardinality of the set of monic irreducible polynomials with trace equal to $a$ is $\frac{1}{qn}\sum_{d\vert n}\mu(d) q^{\frac{n}{d}}$.
\end{proof}

\begin{lem}\label{lem:Bound2}
Let $q$ be a prime power, and let $n$ be a positive integer; then the number of elements $\alpha\in \mathbb{F}_{q^n}$ such that $x^2-\alpha x + 1$ is reducible over $\mathbb{F}_{q^n}$ equals
\[
\begin{cases}
\frac{q^n+1}{2}&\textup{if $q$ is odd and}\\
\;\;\,\frac{q^n}{2}\;\;\,&\textup{if $q$ is even.}\;
\end{cases}
\]
\end{lem}

\begin{proof} Let $\alpha$ be any element of $\mathbb{F}_{q^n}$; then the polynomial $x^2-\alpha x + 1$ is reducible over $\mathbb{F}_{q^n}$ if and only if $\alpha=\beta + \frac{1}{\beta}$ for some $\beta\in \mathbb{F}_{q^n}$.
Let us consider the map 
\[
\phi:\mathbb{F}_{q^n}^{\times}\rightarrow \mathbb{F}_{q^n}\quad;\quad\beta\mapsto\beta + \frac{1}{\beta}\;;
\]
we have to compute the cardinality of its image. To do so, we compute the cardinalities of the fibers of $\phi$: let  us consider an element $\alpha\in\F_{q^n}$ that lies in the image of $\phi$, i.e.\ for which the quadratic equation
\[
f_\alpha(x)\,=\,x^2-\alpha x+1\,\in\mathbb{F}_{q^n}[x]
\]
has a root $\beta$ in $\F_{q^n}$; then $1/\beta$ is also a root of the above equation. Let us note that $\beta=1/\beta$ if and only if $\beta=\pm 1$. Hence, if $\beta\neq\pm 1$, the cardinality of $\phi^{-1}(\alpha)$ is $2$, since $f_\alpha(x)$ can have at most two different roots. On the other hand, if $\beta=\pm 1$, then the cardinality of $\phi^{-1}(\alpha)$ is $1$, because $\beta$ is then a multiple root of $f_\alpha(x)$: indeed, then
\[
f'_\alpha(x)\,=\,2x-\alpha\,=\,2x-(\beta+\frac{1}{\beta})=2x-2\beta
\]
vanishes in $\beta$. Let us moreover note that if $q$ is odd, then $\phi(1)=2$ is different from $\phi(-1)=-2$. Since $\F_{q^n}^\times$ is the disjoint union of the fibers of $\phi$, we conclude that for $q$ odd,
\[
q^n-1\,=\,2\cdot(|\textup{im}(\phi)|-2)+1+1\;,
\]
whereas for $q$ even,
\[
q^n-1\,=\,2\cdot(|\textup{im}(\phi)|-1)+1\;;
\]
the claim now follows by a straightforward computation.
\end{proof}

From Lemmas \ref{lem:Bound1} and \ref{lem:Bound2} we obtain the following result.

\begin{lem}\label{lem:Bound3} Let us assume that $p\nmid n$; then the number of $\alpha\in \overline{\mathbb{F}}_{q}$ with
$\mathbb{F}_q(\alpha)/\mathbb{F}_q$ of degree $n$, nonzero trace and such that the polynomial $x^2-\alpha x + 1$ is irreducible over $\mathbb{F}_{q^n}$ is greater than or equal to 
\begin{equation*}
\frac{q-1}{q}\sum_{d\vert n}\mu(d) q^{\frac{n}{d}} -\frac{q^n+1}{2}
\end{equation*}
\end{lem}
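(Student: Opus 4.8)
The plan is to obtain the bound by combining Lemmas \ref{lem:Bound1} and \ref{lem:Bound2} through a trivial union-bound argument; essentially no new computation is needed.

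First I would pass from counting elements $\alpha$ to counting their minimal polynomials. Every monic irreducible $f\in\F_q[x]$ of degree $n$ has exactly $n$ distinct roots in $\overline{\F}_q$; each such root $\alpha$ satisfies $[\F_q(\alpha):\F_q]=n$, and $\tr_{\F_q(\alpha)/\F_q}(\alpha)$ equals $-a_{n-1}$ for all of them, hence is nonzero exactly when $f$ has nonzero trace in the sense of the appendix. Conversely, each $\alpha$ with $[\F_q(\alpha):\F_q]=n$ is a root of a unique such $f$. Therefore, using $p\nmid n$ to invoke Lemma \ref{lem:Bound1}, the number of $\alpha\in\overline{\F}_q$ with $\F_q(\alpha)/\F_q$ of degree $n$ and nonzero trace equals
\[
n\cdot\frac{q-1}{qn}\sum_{d\mid n}\mu(d)q^{n/d}\;=\;\frac{q-1}{q}\sum_{d\mid n}\mu(d)q^{n/d}\;.
\]

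Next I would subtract off the elements that must be excluded. Any $\alpha$ counted above lies in $\F_{q^n}$, and for it $\F_q(\alpha)=\F_{q^n}$, so the requirement that $x^2-\alpha x+1$ be irreducible over $\F_{q^n}$ is exactly the requirement that it be irreducible over $\F_q(\alpha)$ (equivalently, that a root $\beta$ not lie in $\F_q(\alpha)$, as in Lemma \ref{lem:relation}). The number of $\alpha\in\F_{q^n}$ for which $x^2-\alpha x+1$ is \emph{reducible} over $\F_{q^n}$ is computed in Lemma \ref{lem:Bound2}: it is $\frac{q^n+1}{2}$ if $q$ is odd and $\frac{q^n}{2}$ if $q$ is even, hence at most $\frac{q^n+1}{2}$ in all cases. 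Removing at most this many elements from the count of the previous paragraph yields the asserted lower bound.

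There is no genuine obstacle here: the substance lies in Lemmas \ref{lem:Bound1} and \ref{lem:Bound2}, and the present statement is merely their combination via the inequality $|X\setminus Y|\ge|X|-|Y|$. The only point that warrants a word of care is the bookkeeping in the first paragraph — multiplying the polynomial count by $n$, and matching ``$\alpha$ has nonzero trace'' with ``the minimal polynomial of $\alpha$ has nonzero trace'' — together with the observation that all the relevant $\alpha$ lie in $\F_{q^n}$ so that Lemma \ref{lem:Bound2} is directly applicable; everything else is immediate.
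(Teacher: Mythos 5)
Your argument is correct and is exactly the one the paper intends (the paper offers no proof for this lemma, treating it as immediate from Lemmas~\ref{lem:Bound1} and~\ref{lem:Bound2}). The bookkeeping you supply — counting $\alpha$ rather than minimal polynomials by multiplying by $n$, identifying ``nonzero trace of $\alpha$'' with ``nonzero trace of its minimal polynomial,'' observing that all relevant $\alpha$ lie in $\F_{q^n}$, and then applying $|X\setminus Y|\ge|X|-|Y|$ with the uniform upper bound $\frac{q^n+1}{2}$ — is precisely what is needed and is carried out correctly.
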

\qed
\begin{proof}[Proof of Proposition \ref{prop:appendix}] Let us now assume that $q\geq 5$ and that $p\nmid n$.  Combining  Lemmas \ref{lem:relation} and \ref{lem:Bound3}, we see that it suffices to prove that the number 
\begin{equation*}
M:=\frac{q-1}{q}\sum_{d\vert n}\mu(d) q^{\frac{n}{d}} -\frac{q^n+1}{2}
\end{equation*} is positive. We distinguish two cases. First, if $n=1$, then 
\begin{equation*}M=\frac{q-1}{q}q -\frac{q+1}{2}=\frac{1}{2}q-\frac{3}{2}>0 \end{equation*}
as desired. On the other hand, if $n>1$, then 
\begin{eqnarray*}
M&=&\frac{q-1}{q}\sum_{d\vert n}\mu(d) q^{\frac{n}{d}} -\frac{q^n+1}{2}\\
&=&\frac{q-1}{q}(q^n + \sum_{d\vert n\atop d\not=1}\mu(d) q^{\frac{n}{d}}) -\frac{q^n+1}{2}\\
&=&\frac{1}{2}(q^n-1) -q^{n-1} + \frac{q-1}{q}\sum_{d\vert n\atop d\not=1}\mu(d) q^{\frac{n}{d}}\\
&\geq&\frac{1}{2}(q^n-1) -q^{n-1}-\frac{q^n-1}{q-1}\\
&\geq& \frac{1}{4}(q^n-1) -  q^{n-1}\;,\\
&>&0
\end{eqnarray*}
where for the third last inequality we used the inequalities $\mu(d)\geq -1$ and $(q-1)/q\leq 1$ as well as the geometric series.
\end{proof}

\bibliographystyle{plain}
\bibliography{tameramabvar}

\end{document}